\theoremstyle{plain}
\newtheorem{theorem}{\indent\bf Theorem}[section]
\newtheorem{lemma}[theorem]{\indent\bf Lemma}
\newtheorem{corollary}[theorem]{\indent\bf Corollary}
\theoremstyle{definition}
\newtheorem{remark}[theorem]{\indent\bf Remark}
\begin{document}
\title[Zero-free regions and irreducibility criteria]{Zero-free angular sectors and lens-shaped regions for polynomials, with applications to irreducibility
}
\author[C.M. Bonciocat]{Ciprian Mircea Bonciocat}
\address{Address: University of California, Los Angeles, CA 90095}
\email{Ciprian\_Bonciocat@yahoo.com}

\author[N.C. Bonciocat]{Nicolae Ciprian Bonciocat}
\address{Simion Stoilow Institute of Mathematics of the Romanian Academy, Research Unit 7, P.O. Box 1-764, Bucharest 014700, Romania}
\email{Nicolae.Bonciocat@imar.ro}

\dedicatory{Dedicated to the memory of Professor Doru \c Stef\u anescu}

\keywords{irreducible polynomials; prime numbers; zero-free sectors}
\subjclass[2010]{Primary 11R09; Secondary 11C08.}
\begin{abstract}
For a real polynomial $f$ we present explicit zero-free angular sectors in the complex plane, symmetric with respect to the real axis, with angles depending only on the degree of $f$, and vertices expressed in terms of the coefficients of $f$.
We also describe zero-free lens-shaped regions for $f$ that are associated to the zero-free sectors of the reciprocal of $f$. As an application, we use these zero-free regions to obtain irreducibility criteria for polynomials with integer coefficients that take a prime or a prime power value.
\end{abstract}
\maketitle
\section{Introduction}

Some of the classical or more recent irreducibility criteria for polynomials $f$ with integer coefficients rely on the existence of a suitable prime or prime power in the factorization of $f(m)$ for some integer $m$, and on suitable estimates on the distance between $m$ and the roots of $f$.  The first such criteria appeared in the works of St\"ackel \cite{Stackel},  P\'{o}lya and Szeg\"{o} \cite{PolyaSzego}, Ore \cite{Ore}, Weisner \cite{Weisner} and Dorwart \cite{Dorwart}. Some of the most famous and appealing criteria of this type come from writing certain families of integers in number systems with base $B$, and then replacing the base by an indeterminate. For instance, in 
\cite{PolyaSzego} P\'{o}lya and Szeg\"{o} present the following elegant irreducibility criterion of A. Cohn:\medskip

{\bf Theorem 1.} \ \emph{If we express a prime $p$ in the decimal system as 
\[
p=\sum\limits_{i=0}^{n}a_{i}10^{i},\quad0\leq a_{i}\leq9,
\]
then the polynomial $\sum_{i=0}^{n}a_{i}X^{i}$ is irreducible in $\mathbb{Z}[X]$.}
\medskip

This result was extended to an arbitrary base $B$ by Brillhart, Filaseta and Odlyzko \cite{Brillhart}: \medskip

{\bf Theorem 2.} \ \emph{If we express a prime $p$ in the number system with base $B\geq2$ as 
\[
p=\sum\limits_{i=0}^{n}a_{i}B^{i},\quad0\leq a_{i}\leq B-1,
\]
then the polynomial $\sum_{i=0}^{n}a_{i}X^{i}$ is irreducible in $\mathbb{Z}[X]$.}
\medskip

Another generalization of Cohn's theorem was achieved in \cite{Filaseta1} by replacing the prime $p$ with a composite number $pq$ with $q<B$.
Further generalizations \cite{Brillhart}, \cite{Filaseta2}, allow the coefficients of $f$ to be different from digits. For instance, Filaseta \cite{Filaseta2} obtained the following result for polynomials with non-negative coefficients.\medskip

{\bf Theorem 3.} \ \emph{Let $f(X)=\sum_{i=0}^{n}a_{i}X^{i}$ be such that 
$f(10)$ is a prime. If the $a_{i}$'s satisfy $0\leq a_{i}\leq a_{n}10^{30}$
for each $i=0,1,\dots,n-1$, then $f(X)$ is irreducible.} \medskip

Cole, Dunn, and Filaseta \cite{CDF} obtained sharp bounds $M(b)$ depending on an integer $b\in [3,20]$ such that if $f(b)$ is prime and each coefficient of $f$ is non-negative and at most $M(b)$, then $f$ is irreducible. Several irreducibility criteria for polynomials 
that take a prime or a prime power value and have a coefficient of sufficiently large absolute value have been obtained in \cite{Bonciocat1} 
and \cite{Bonciocat2}. For instance, the following result was proved in \cite{Bonciocat1}:
\medskip

{\bf Theorem 4.} \ \emph{If we write a prime number as a sum of integers $
a_{0},\dots,a_{n}$, with $a_{0}a_{n}\neq 0$ and $|a_{0}|>
\sum_{i=1}^{n}|a_{i}|2^{i}$, then the polynomial $\sum_{i=0}^{n}a_{i}X^{i}$
is irreducible over $\mathbb{Q}$.} 
\medskip

A unifying approach using the concept of admissible triples to study irreducibility of polynomials in terms of the prime factorization of the value that they take at a certain integer argument was recently developed by Guersenzvaig \cite{Guersenzvaig}. In \cite{Guersenzvaig} one may also find upper bounds for the total number of irreducible factors (counted with multiplicities) for some classes of integer polynomials (for the study of roots multiplicities and square free factorization we refer the reader to Guersenzvaig and  Szechtman \cite{GS}, and to Mignotte and \c Stef\u anescu \cite{MS}, for instance). Further connections between prime numbers and irreducible polynomials, some of them related to Schinzel's hypothesis, may be found in the work of Murty \cite{RamMurty}, Girstmair \cite{Girstmair}, and Bodin, D\`ebes and Najib \cite{BDN3}, for instance. 

A key factor in proving irreducibility criteria of this type is to obtain sharp estimates for the location of the roots of $f$. Among the earliest estimates for the maximum of the absolute values of the roots of a given polynomial we mention here the bounds due to Cauchy and Lagrange. An improvement of the bound of Lagrange for the largest absolute value of the roots of a polynomial was recently obtained by Batra, Mignotte, and \c Stef\u anescu \cite{BMS}. For additional estimates, of which some rely on the use of some families of parameters, we will only mention here the classical methods of  Ballieu \cite{Ballieu}, \cite{Marden}, Fujiwara \cite{Fujiwara}, Cowling and Thron \cite{Cowling1}, \cite{Cowling2}, Kojima \cite{Kojima}, or methods using estimates for the characteristic roots for complex matrices \cite{Perron} applied to the companion matrix of a polynomial. There are many other useful results in the literature that provide bounds for the real parts or for the imaginary parts of the roots, and also for the positive roots of a polynomial, of which we will only mention the results of Tur\' an \cite{Turan}, L\' aszl\' o \cite{Laszlo}, Kioustelidis \cite{Kioustelidis} and \c Stef\u anescu \cite{DStef1}. Other classical related results refer to the number of zeros of a polynomial in a sector. For such results, as well as generalizations to complex polynomials of Sturm's Theorem\iffalse giving the exact number of zeros of a real polynomial on a given interval of the real axis \fi, and of Descartes' Rule, we refer the reader to Marden \cite{Marden}, chapter IX.

The aim of this paper is twofold. Our first aim is to obtain explicit sectors in the complex plane (with vertices on the real axis) where a real polynomial $f$ of degree $n$ has no roots. In the general case, the $x$-coordinate $v_f$ of the vertex of such a sector will depend on the coefficients of $f$, while the angle of the sector will only depend on the degree of $f$. We will search for families of polynomials $f(X)=a_0+a_1X+\cdots +a_nX^n$ for which $v_f$ is independent on the coefficients $a_i$, provided that the coefficients satisfy certain inequalities. This is the case, for instance, of the polynomials whose coefficients are all non-negative. As a counterpart for a zero-free sector of a polynomial $f$ we describe a zero-free lens-shaped region for $f$, that is obtained by inversion ($z \mapsto \frac 1z$) with respect to the origin of the complex plane of a zero-free sector for the reciprocal $\tilde{f}$ of $f$. The results obtained in this respect will appear in Section \ref{MulteSectoare}.
Our second goal is to use this kind of information on the location of the roots of $f$ to derive irreducibility conditions for $f$ in the case that the coefficients $a_i$ are integers, and the prime factorization of $f(m)$ is known for some suitable integer argument $m$ inside the zero-free sector or inside the zero-free lens-shaped region associated to $f$. To get a glimpse of the results that we obtain, we will only mention here the simplest three irreducibility criteria that are corollaries of more general results obtained in Section \ref{trei}.
\smallskip

{\em Let $f(X)$ be a polynomial with non-negative integer coefficients, of degree $n\geq 2$. If $f(m)$ is a prime number for some integer $m>\frac{1}{\sin\frac{\pi}{n}}$, then $f$ is irreducible over $\mathbb{Q}$. 
}
\smallskip

{\em Let $f(X)=a_0+a_1X+\cdots +a_nX^n\in \mathbb{Z}[X]$ with $a_0\neq 0$, and denote by $j_1 < j_2 < \cdots < j_\ell$ all the indices $j$ for which $a_j < 0$. If $a_n>|a_{j_1}+\cdots + a_{j_\ell}|$ and $f(m)$ is a prime number for an integer $m > 1+\frac{1}{\sin{\frac{\pi}{n}}}$,
then $f$ is irreducible over $\mathbb{Q}$.}
\smallskip

{\em Let $f(X)=a_0+a_1X+\cdots +a_nX^n\in \mathbb{Z}[X]$ be a polynomial of degree $n\geq 2$, with all partial sums $a_n+a_{n-1}+\cdots +a_{n-j}$ non-negative, where $0\leq j\leq n$. If for some integer $m>1+\frac{1}{\sin \frac{\pi}{n}}$ we have $f(m)=p^k$ with $p$ a prime number, $k$ a positive integer and $p\nmid f'(m)$, then $f$ is irreducible over $\mathbb{Q}$. }

\smallskip

A series of examples of infinite families of irreducible polynomials obtained from the irreducibility criteria in Section \ref{trei} will be provided in the last section of the paper. We mention here that we will only restrict our attention to finding zero-free sectors and lens-shaped regions that are situated in the right half-plane. Correspondingly, we will only study positive integers $m$ for which the prime factorization of $f(m)$ is suitable to derive irreducibility conditions for $f$. One may obtain similar results that allow the integer $m$ to be negative, and the zero-free sectors and lens-shaped regions to be situated in the left half-plane, by considering instead of $f(X)$ the polynomial $f(-X)$.

\section{Zero-free sectors and lens-shaped regions for polynomials}\label{MulteSectoare}
	
This section is devoted to finding explicit sectors $S_{v_{f},\theta }$ in the complex plane with vertex $v_{f}$ on the real axis, symmetric with respect to the real axis, and having total angle $2\theta $, where an $n$-degree polynomial $f(X)$ with real coefficients cannot vanish.

We will use the following notation. 

For two real numbers $v,\theta $ with $0<\theta <\pi$ we define the following three sectors
\begin{eqnarray*}
S_{v,\theta } & = & \left\{ z=v+\rho e^{i\varphi }: \rho >0, \ |\varphi |<\theta \right\} \\
S_{v,\theta }^+ & = & \left\{ z=v+\rho e^{i\varphi }: \rho >0, \ 0<\varphi <\theta \right\} ,\\
S_{v,\theta }^- & = & \left\{ z=v+\rho e^{i\varphi }: \rho >0, \ 0>\varphi >-\theta \right\} .
\end{eqnarray*}\vspace{-1.5em}
\begin{figure}[h]
\begin{tikzpicture}[>=stealth]
\draw[->] (-0.5, -1) -- (-0.5, 1.5); \draw[->] (-1, 0) -- (3.5, 0);
\begin{scope}[shift={(-0.5,0)}]
	\draw[fill] (0.7, 0) circle (0.05);
	\draw[thick, fill=gray, fill opacity = 0.2] (3, 0.8) -- (0.7, 0) -- (3, -0.8);
	\draw[thick, dashed] (3, 0.8) -- (3.5, 0.96)    (3, -0.8) -- (3.5, -0.96);
	\draw (0.7, 0) node[anchor=45] {$v$};
	\draw (1.7, 0.04) node[anchor=217] {$\theta$};
	\draw (1.8, -0.9) node {$S_{v, \theta}$};
\end{scope}
\end{tikzpicture}\;\;
\begin{tikzpicture}[>=stealth]
\draw[->] (-0.5, -1) -- (-0.5, 1.5); \draw[->] (-1, 0) -- (3.5, 0);
\begin{scope}[shift={(-0.5,0)}]
	\draw[fill] (0.7, 0) circle (0.05);
	\draw[thick, fill=gray, fill opacity = 0.2] (3, 0.8) -- (0.7, 0) -- (3, 0);
	\draw[thick, dashed] (3, 0.8) -- (3.5, 0.96)    (3, 0) -- (3.5, 0);
	\draw (0.7, 0) node[anchor=45] {$v$};
	\draw (1.7, 0.04) node[anchor=217] {$\theta$};
	\draw (1.8, -0.9) node {$S^+_{v, \theta}$};
\end{scope}
\end{tikzpicture}\;\;
\begin{tikzpicture}[>=stealth]
\draw[->] (-0.5, -1) -- (-0.5, 1.5); \draw[->] (-1, 0) -- (3.5, 0);
\begin{scope}[shift={(-0.5,0)}]
	\draw[fill] (0.7, 0) circle (0.05);
	\draw[thick, fill=gray, fill opacity = 0.2] (3, -0.8) -- (0.7, 0) -- (3, 0);
	\draw[thick, dashed] (3, -0.8) -- (3.5, -0.96)    (3, 0) -- (3.5, 0);
	\draw (0.7, 0) node[anchor=45] {$v$};
	\draw (1.7, -0.035) node[anchor=-217] {$\theta$};
	\draw (1.8, -0.9) node {$S^-_{v, \theta}$};
\end{scope}
\end{tikzpicture}\;\;
\caption{The sectors $S_{v, \theta}$, $S^+_{v, \theta}$ and $S^-_{v, \theta}$}\label{fig1}
\end{figure}
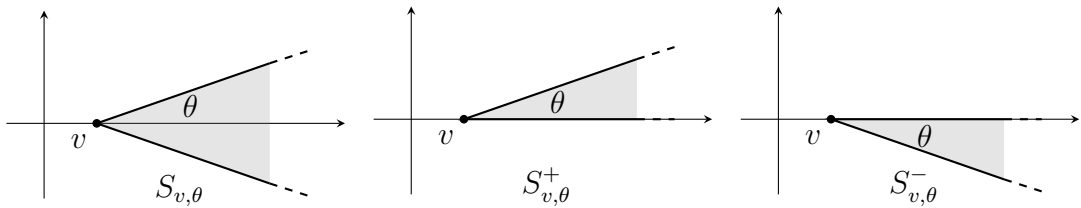
%{\bf Figure 1.}\quad The sectors $S_{v, \theta}$, $S^+_{v, \theta}$ and $S^-_{v, \theta}$
%\medskip

With this notation we have the following two well-known lemmas, the first one being a special case of a more general result of Kempner \cite{Kempner} on complex polynomials that do not vanish on a certain ray $\arg z=\omega $. Even if they are quite elementary, for the sake of completeness we will give proofs for both of them.
\begin{lemma}\label{lemaReal(f)}
If $f(X)\in \mathbb{R}[X]$ is of degree $n\geq 1$ and has non-negative coefficients, then $\Re (f(z))>0$ for any $z$ in the sector $S_{0,\frac{\pi }{2n}}$. In particular, $f$ has no zero in $S_{0,\frac{\pi }{2n}}$.
\end{lemma}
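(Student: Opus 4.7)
The plan is to work directly with the real part of $f(z)$ by writing $z$ in polar form relative to the vertex of the sector (which here is the origin). Let me set $z=\rho e^{i\varphi}$ with $\rho>0$ and $|\varphi|<\pi/(2n)$, so that $z^k=\rho^k e^{ik\varphi}$ for every $k$. Taking real parts in $f(z)=\sum_{k=0}^n a_k z^k$ gives
\[
\Re(f(z))=\sum_{k=0}^n a_k \rho^k \cos(k\varphi).
\]
The whole argument will rest on showing that every term in this sum is non-negative, with at least one term strictly positive.

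The key observation is the bound on the angle: for each index $0\leq k\leq n$ we have $|k\varphi|<k\pi/(2n)\leq \pi/2$, which forces $\cos(k\varphi)>0$ for all such $k$. Combined with the hypothesis that $a_k\geq 0$ and with $\rho^k>0$, this immediately gives $a_k\rho^k\cos(k\varphi)\geq 0$ for every $k$. Since $f$ has degree $n$ and its coefficients are non-negative, the leading coefficient satisfies $a_n>0$, and the term $a_n\rho^n\cos(n\varphi)$ is strictly positive because $|n\varphi|<\pi/2$. Summing term by term yields $\Re(f(z))>0$, which in particular rules out any zero of $f$ in $S_{0,\pi/(2n)}$.

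There is essentially no obstacle here: the proof is a one-line bound on arguments followed by a positivity inspection of the resulting real part. The only minor point worth flagging is that the statement tacitly requires $a_n>0$ (used to ensure strict, not merely weak, positivity), which follows from $\deg f=n$ together with the non-negativity hypothesis; without this, one would only obtain $\Re(f(z))\geq 0$, which would not suffice to conclude that $f$ is zero-free in the open sector.
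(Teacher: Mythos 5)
Your proof is correct and is essentially the same argument as the paper's: the paper phrases the key step as ``all powers $z^j$ with $0\leq j\leq n$ lie in the right half-plane,'' which is exactly your observation that $|j\varphi|<\pi/2$ forces $\cos(j\varphi)>0$, and both arguments then conclude by summing the non-negative terms $a_j\rho^j\cos(j\varphi)$ with the leading term strictly positive. Your explicit remark that $a_n>0$ is needed for strict positivity is a point the paper also makes by assuming $a_n>0$ at the outset.
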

\begin{proof}
Assume that $f(X)=a_0+a_1X +\cdots +a_nX^n$ with $a_n>0$. For a complex number $z\in S_{0,\frac{\pi }{2n}}$, all of  its powers $z^j$ with $j=0,1,\dots ,n$ lie in the right half-plane $\Re(z)>0$, and the same holds for all the complex numbers $a_jz ^j$ with $a_j\neq 0$, as $a_j$ were assumed to be non-negative. As a consequence, $\Re(a_0+a_1z +\cdots +a_nz^n)>0$, and in particular, $z$ cannot be a root of $f$.
\end{proof}
\begin{lemma}{\em (}\cite [Lemma 2] {Filaseta2}{\em )}\label{lemaImaginar(f)}
If $f(X)\in \mathbb{R}[X]$ is of degree $n\geq 1$ and has non-negative coefficients, then $\Im (f(z))>0$ for all $z$ in the sector $S_{0,\frac{\pi }{n}}^{+}$, and $\Im (f(z))<0$ for all $z$ in the sector $S_{0,\frac{\pi }{n}}^{-}$. In particular, $f$ has no zero in the sector $S_{0,\frac{\pi }{n}}$.
\end{lemma}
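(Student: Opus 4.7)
The plan is to mimic the structure of the proof of the preceding lemma, but instead of looking at the real part of $f(z)$ we track its imaginary part, exploiting the fact that the sector $S^+_{0,\pi/n}$ has angular width exactly $\pi/n$ and is anchored at the origin.

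First I would write an arbitrary $z \in S^+_{0,\pi/n}$ in polar form as $z = \rho e^{i\varphi}$ with $\rho > 0$ and $0 < \varphi < \pi/n$. For $j = 1, 2, \dots, n$ the argument of $z^j$ is $j\varphi$, and from $0 < \varphi < \pi/n$ one gets $0 < j\varphi \leq n\varphi < \pi$, so $\sin(j\varphi) > 0$ for every such $j$. Writing $f(X) = a_0 + a_1 X + \cdots + a_n X^n$ with $a_j \geq 0$ and $a_n > 0$, the imaginary part decomposes as
\[
\Im(f(z)) \;=\; \sum_{j=0}^{n} a_j \rho^{j}\sin(j\varphi) \;=\; \sum_{j=1}^{n} a_j \rho^{j}\sin(j\varphi).
\]
Each term in the last sum is non-negative because $a_j \geq 0$, $\rho^j > 0$, and $\sin(j\varphi) > 0$, while the top term $a_n \rho^n \sin(n\varphi)$ is strictly positive since $a_n > 0$ and $n\varphi \in (0,\pi)$. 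Hence $\Im(f(z)) > 0$ on $S^+_{0,\pi/n}$.

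For $z \in S^-_{0,\pi/n}$ I would run the same argument with $-\pi/n < \varphi < 0$, giving $\sin(j\varphi) < 0$ for $j = 1, \dots, n$, so each nonzero term of $\sum_{j=1}^n a_j \rho^j \sin(j\varphi)$ is non-positive, and the leading term $a_n \rho^n \sin(n\varphi)$ is strictly negative; thus $\Im(f(z)) < 0$ there. The final claim that $f$ has no zero in $S_{0,\pi/n} = S^+_{0,\pi/n} \cup (\mathbb{R}_{>0}) \cup S^-_{0,\pi/n}$ follows immediately: in the two open sub-sectors $\Im(f(z)) \neq 0$, while on the positive real axis $f$ takes strictly positive values since all $a_j \geq 0$ and $a_n > 0$.

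There is no real obstacle here; the only subtlety worth flagging is the sharpness of the bound $\varphi < \pi/n$, which guarantees $n\varphi < \pi$ strictly and is exactly what is needed for $\sin(n\varphi) > 0$. Any larger angle would potentially allow $\sin(n\varphi) = 0$ or $< 0$, and the sign argument would break down for the leading term.
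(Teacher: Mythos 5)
Your proof is correct and follows essentially the same route as the paper's: the paper observes that each $a_j z^j$ ($j\geq 1$, $a_j\neq 0$) lies in the open upper (resp.\ lower) half-plane for $z\in S^+_{0,\pi/n}$ (resp.\ $S^-_{0,\pi/n}$) and that $f$ has no positive real roots, which is exactly your polar-coordinate computation written more succinctly. Your version just makes the strict positivity of the leading term $a_n\rho^n\sin(n\varphi)$ explicit, which is a fine (and slightly more careful) way to present the same argument.
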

\begin{proof}
Assume again that $f(X)=a_0+a_1X +\cdots +a_nX^n$ with $a_n>0$. For a complex number $z\in S_{0,\frac{\pi }{n}}^+$, all the complex numbers $a_jz^j$ with $j\in \{1,\dots ,n\}$ and $a_j\neq 0$ lie in the upper half-plane $\Im(z)>0$, so for such $z$ we have $\Im(a_0+a_1z +\cdots +a_nz^n)>0$. Similarly, for $z\in S_{0,\frac{\pi }{n}}^-$, all the complex numbers $a_jz ^j$ with $j\in \{1,\dots ,n\}$ and $a_j\neq 0$ lie in the lower half-plane $\Im(z)<0$, so $\Im(a_0+a_1z +\cdots +a_nz^n)<0$. Finally, since $f$ has no positive real roots, we conclude that it has no zeros in $S_{0,\frac{\pi }{n}}$.
\end{proof}
Some remarks are in order. For a polynomial $f$ of degree $n\geq 1$ with real coefficients of arbitrary signs, it is in general way more difficult to find a sharp sector where it has no roots. The most natural way to search for such sectors is to use translates of the indeterminate that will lead to a new polynomial $f(X+\alpha )$ all of whose non-zero coefficients have the same sign. Thus, if $f(X)=a_0+a_1X+\cdots +a_nX^n$, then $f(X+\alpha )=b_0+b_1X+\cdots +b_nX^n$ with coefficients $b_k$ given by
\[
b_k=\sum\limits_{i=0}^{n-k}\alpha ^{i}a_{k+i}\tbinom{k+i}{i},\quad k=0,\dots ,n.
\]
Assume that $a_n>0$. For the coefficients $b_0,\dots ,b_k$ to be non-negative, we may for instance ask $\alpha $ to exceed all the real roots of the polynomials 
\[
g_k(X)=\sum\limits_{i=0}^{n-k}a_{k+i}\tbinom{k+i}{i}X^{i}, \quad k=0,\dots ,n.
\]
Once we have found such an $\alpha $, we may conclude that $f$ has no roots in the sector $S_{\alpha ,\frac{\pi}{n}}$, but finding an $\alpha $ as small as possible with this property is by no means an easy task, and here some methods of estimating the real roots of a real polynomial might be useful. Another idea is to choose $\alpha $ to be an upper bound for the real parts of all the roots of $f$. As $a_n>0$, and the non-real roots of $f$ come in conjugate pairs, it follows that $f(X+\alpha )$ has only non-negative coefficients. However, finding a reasonably sharp such $\alpha $ is in general quite difficult as well.  

One slightly simpler method to find a suitable $\alpha $ is given by the following lemma. 
\begin{lemma}\label{lemapozitivagenerala}
Let $f(X)=a_0+a_1X+\cdots +a_nX^n$ be a polynomial of degree $n\geq 1$ with real coefficients, and $\alpha $ a positive real number such that all the sums of the form
\[
\alpha ^{j}a_n+\alpha ^{j-1}a_{n-1}+\cdots +a_{n-j}
\]
are non-negative, where $0\leq j\leq n$. Then  $f(X+\alpha )$ has non-negative coefficients, so $f$ has no zeros in $S_{\alpha ,\frac{\pi }{n}}$. 
\end{lemma}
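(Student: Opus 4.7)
The plan is to reduce the claim to Lemma \ref{lemaImaginar(f)} by showing that $g(X):=f(X+\alpha)$ has non-negative coefficients. The key observation is that the partial sums
\[
S_j := \alpha^j a_n + \alpha^{j-1} a_{n-1} + \cdots + a_{n-j}, \qquad 0 \leq j \leq n,
\]
appearing in the hypothesis are precisely the intermediate quantities produced when one divides $f(X)$ by $X - \alpha$ via Horner's synthetic scheme.

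First I would perform this division to write
\[
f(X) = (X - \alpha)\, q(X) + f(\alpha),
\]
where, by unwinding the Horner recursion, the coefficients of $q(X)$ read from highest to lowest degree are exactly $S_0, S_1, \ldots, S_{n-1}$, and the remainder is $f(\alpha) = S_n$. All of these are non-negative by hypothesis, so $q(X)$ has non-negative coefficients and $f(\alpha) \geq 0$.

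Next I would substitute $X \mapsto X + \alpha$, obtaining
\[
f(X + \alpha) = X\, q(X + \alpha) + f(\alpha).
\]
Since $q$ has non-negative coefficients and $\alpha > 0$, the binomial expansion of $q(X+\alpha)$ has non-negative coefficients (each being a sum of terms of the form $q_j \binom{j}{k} \alpha^{j-k}$, all non-negative). Multiplying by $X$ and adding the non-negative constant $f(\alpha)$ preserves this property, so $g(X) = f(X+\alpha)$ has non-negative coefficients, and its leading coefficient $a_n$ is positive since $f$ has degree $n$ and $S_0 = a_n \geq 0$.

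Finally, applying Lemma \ref{lemaImaginar(f)} to $g$ gives that $g$ has no zeros in $S_{0, \pi/n}$, which translates back to $f$ having no zeros in $S_{\alpha, \pi/n}$. The only step that requires any verification is the identification of the synthetic-division coefficients as the $S_j$'s, but this is a direct computation from Horner's recursion and not a real obstacle; an alternative route would be a clean induction on $n$ using the same decomposition, or an Abel summation expressing each coefficient of $f(X+\alpha)$ as a non-negative combination of the $S_j$'s, but the Horner route is the most transparent.
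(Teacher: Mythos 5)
Your proof is correct. It is worth noting, though, that your route and the paper's are two presentations of the same underlying algebra. The paper argues by induction on $n$, peeling off the top term via the decomposition $f(X)=a_n(X^n-\alpha X^{n-1})+g(X)$, where $g$ has degree $n-1$ and satisfies the same hypothesis with the partial sums shifted by one index; the transformed top term $a_nX(X+\alpha)^{n-1}$ visibly has non-negative coefficients, and the induction handles $g$. If you unroll that induction completely, you obtain exactly your closed-form identity
\[
f(X)=\sum_{j=0}^{n-1}S_j\,X^{\,n-j-1}(X-\alpha)+S_n=(X-\alpha)\,q(X)+f(\alpha),
\]
i.e.\ the Horner synthetic-division of $f$ by $X-\alpha$, whose intermediate quantities are precisely the hypothesized partial sums $S_j$. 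Your version buys a one-shot, induction-free argument and makes transparent \emph{why} these particular sums are the natural hypothesis (they are the coefficients of the quotient $q$); the paper's version avoids having to set up and verify the Horner recursion explicitly, trading that for a short inductive step. Both correctly reduce to Lemma \ref{lemaImaginar(f)} at the end, and your remark that $a_n>0$ (from $a_n\neq 0$ and $S_0=a_n\geq 0$) is the right observation to make before invoking it. The one step you flag as deferred --- the identification of the quotient coefficients with the $S_j$ --- is indeed a routine unwinding of the recursion $b_{n-1-j}=a_{n-j}+\alpha b_{n-j}$ and is not a gap.
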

\begin{proof}
Let us denote the sum $\alpha ^{j}a_n+\alpha ^{j-1}a_{n-1}+\cdots +a_{n-j}$ by $s_{f,j,\alpha}$ for each $0\leq j\leq n$. 
We argue by induction on $n$: if $n=1$, the inequalities $a_1\geq 0$ and $\alpha\cdot a_1+a_0\geq 0$ are equivalent to the assertion that the coefficients of $f(X+\alpha )=a_1X+\alpha a_1+a_0$ are non-negative; so assume that $n>1$, and that the statement has already been proven for smaller degrees. Note that one may write
\[
f(X)=a_n(X^n-\alpha X^{n-1})+[(\alpha a_n+a_{n-1})X^{n-1}+a_{n-2}X^{n-2}+\cdots +a_1X+a_0].
\]
Denote the polynomial in square brackets by $g(X)$, and observe that $s_{g,j,\alpha}=s_{f,j+1,\alpha}$ for all $j\in\{ 0,\dots ,n-1\} $. Thus, $g$ satisfies the inductive hypothesis, and so $g(X+\alpha )$ has non-negative coefficients. It thus suffices to prove the statement for the remaining term $a_n(X^n-\alpha X^{n-1})$, where $a_n\geq 0$ by hypothesis. This is straight-forward, as
\[
f(X+\alpha )-g(X+\alpha )=a_nX(X+\alpha )^{n-1}
\]
has positive coefficients, so the induction is complete. In particular, by Lemma \ref{lemaImaginar(f)}, $f(X+\alpha )$ has no zeros in the sector $S_{0,\frac{\pi}{n}}$, so $f$ has no zeros in the sector $S_{\alpha,\frac{\pi}{n}}$, as claimed. 
\end{proof}
\iffalse We note here that Lemma \ref{lemapozitivagenerala} has the following obvious companion for the case that none of the sums $s_{f,j,\alpha}$, $0\leq j\leq n$ are positive:
\begin{lemma}\label{lemanegativagenerala}
Let $f(X)=a_0+a_1X+\cdots +a_nX^n$ be a polynomial of degree $n\geq 1$ with real coefficients, and $\alpha $ a positive real number such that all the sums of the form
\[
\alpha ^{j}a_n+\alpha ^{j-1}a_{n-1}+\cdots +a_{n-j}
\]
are non-positive, where $0\leq j\leq n$. Then  $f(X+\alpha )$ has non-positive coefficients, so $f$ has no zeros in $S_{\alpha ,\frac{\pi }{n}}$. 
\end{lemma}\fi %%%%

We note here that a positive real number $\alpha $ as in the statement of Lemma \ref{lemapozitivagenerala} always exists. For instance, one may choose $\alpha$ to exceed all the real roots (or the absolute values of all the roots) of the polynomials $f_j(X)=a_nX^{j}+a_{n-1}X^{j-1}+\cdots + a_{n-j}$, $j=0,\dots ,n$. Here too, finding an $\alpha $ as small as possible with the properties required in Lemma \ref{lemapozitivagenerala} is in general a difficult task. However, for $\alpha =1$, the conditions on the coefficients of $f$ in Lemma \ref{lemapozitivagenerala} take a simpler form, and this will be used in the following section. 

The remaining part of this section is devoted to obtaining several explicit sectors where a polynomial has no roots, provided some additional information on the signs of its coefficients is known. The method that we will employ relies on a suitable choice of some families of parameters, and is inspired by the classical results of Fujiwara \cite{Fujiwara} that establish bounds for the absolute values of the roots of a polynomial. To do this, we will need one more elementary lemma, concerning polynomials of the form $f(X)=X^n-\alpha X^{\ell}$ with $\alpha \geq 0$.

\begin{lemma}\label{douamonoameGenerala}
Let $n > \ell $ be non-negative integers, and $\alpha$ a non-negative real number. Then 

\noindent i)\ \ $\Re(z^n - \alpha z^\ell ) > 0$ for all complex numbers $z\in S_{\alpha^{\frac{1}{n-\ell}},\frac{\pi }{2n}}$;

\noindent ii) $\Im(z^n - \alpha z^\ell ) > 0$ for all $z\in S_{\alpha^{\frac{1}{n-\ell}},\frac{\pi }{n}}^+$, and $\Im(z^n - \alpha z^\ell ) < 0$ for all $z\in S_{\alpha^{\frac{1}{n-\ell}},\frac{\pi }{n}}^-$.
\end{lemma}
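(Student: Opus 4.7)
My plan is to reduce both parts of the lemma to Lemmas \ref{lemaReal(f)} and \ref{lemaImaginar(f)} by the change of variable $w = z - v$. Write $m := n - \ell$ and $v := \alpha^{1/m}$, so that $v^m = \alpha$ and hence $g(v) = 0$ for $g(X) := X^n - \alpha X^\ell$. The heart of the argument is to expand $g$ around $v$ via the binomial theorem and verify that $h(w) := g(v + w)$ is a polynomial in $w$ of degree $n$ with only non-negative coefficients. Once this is in place, the geometric observation that the translation $z \mapsto z - v$ is a bijection $S_{v,\theta} \to S_{0,\theta}$, $S^+_{v,\theta} \to S^+_{0,\theta}$, $S^-_{v,\theta} \to S^-_{0,\theta}$ lets me read off both statements from the two earlier lemmas.

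Concretely, after using $\alpha = v^m$ to align the powers of $v$ in the two binomial expansions, the coefficient of $w^k$ in $h(w)$ should come out to $[\binom{n}{k} - \binom{\ell}{k}]\, v^{n-k}$ for $0 \leq k \leq \ell$, and $\binom{n}{k}\, v^{n-k}$ for $\ell < k \leq n$. The $k = 0$ term vanishes (as it must, since $g(v) = 0$), all remaining coefficients are manifestly non-negative because $\binom{n}{k} \geq \binom{\ell}{k}$ and $v \geq 0$, and the leading coefficient is $\binom{n}{n} = 1$. So $h$ has degree exactly $n$ with non-negative coefficients, as required.

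Applying Lemma \ref{lemaReal(f)} to $h$ with $\theta = \pi/(2n)$ gives $\Re h(w) > 0$ on $S_{0,\pi/(2n)}$, i.e.\ $\Re g(z) > 0$ on $S_{v,\pi/(2n)}$, which is exactly part (i); part (ii) follows in exactly the same way from Lemma \ref{lemaImaginar(f)}, applied separately on $S^+_{0,\pi/n}$ and $S^-_{0,\pi/n}$. The degenerate case $\alpha = 0$ deserves a line: there $v = 0$ and $g(z) = z^n$, so the statement reduces directly to Lemmas \ref{lemaReal(f)} and \ref{lemaImaginar(f)} applied to $X^n$. I do not foresee any serious obstacle here: the whole argument rests on identifying the correct $v = \alpha^{1/(n-\ell)}$, which is forced by the requirement $g(v) = 0$, and the non-negativity check is essentially the one-line fact that $\binom{n}{k} \geq \binom{\ell}{k}$ whenever $n \geq \ell$.
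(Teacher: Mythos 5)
Your proof is correct and follows essentially the same route as the paper: translate by $v=\alpha^{1/(n-\ell)}$, verify that the shifted polynomial has non-negative coefficients, and invoke Lemmas \ref{lemaReal(f)} and \ref{lemaImaginar(f)}. The only (immaterial) difference is that you check non-negativity by a direct binomial expansion and the inequality $\binom{n}{k}\geq\binom{\ell}{k}$, whereas the paper writes $f(X+\alpha^{\frac{1}{n-\ell}})=(X+\alpha^{\frac{1}{n-\ell}})^{\ell}\sum_{k=0}^{n-\ell-1}\binom{n-\ell}{k}X^{n-\ell-k}\alpha^{\frac{k}{n-\ell}}$ and reads off positivity from this factored form.
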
	
\begin{proof}
The case $\alpha =0$ follows directly from Lemma \ref{lemaReal(f)} and Lemma \ref{lemaImaginar(f)}, so let us assume that $\alpha >0$. Let $f(X)=X^n-\alpha X^{\ell }$, and let us note that 
\[
f(X+\alpha ^{\frac{1}{n-\ell }})=(X+\alpha ^{\frac{1}{n-\ell }})^{\ell }\sum\limits _{k=0}^{n-\ell -1}\tbinom{n-\ell }{k}X^{n-\ell -k}\alpha ^{\frac{k}{n-\ell }},
\]
which has only positive coefficients. The proof follows now by Lemma \ref{lemaReal(f)} and Lemma \ref{lemaImaginar(f)} applied to $f(X+\alpha ^{\frac{1}{n-\ell }})$.
\end{proof}
Our first result that provides zero-free sectors for a polynomial, provided we know the signs of its coefficients, is the following:
\begin{theorem} \label{SectorGenerala1}
Let $f(X) = a_n X^n + \cdots + a_1 X + a_0$ be a polynomial with real coefficients, with $a_n > 0$, and denote by $j_1 < j_2 < \cdots < j_\ell$ the indices $j$ for which $a_j < 0$. Let
	\begin{equation}\label{alaFujiwara}
		v= \max \limits _{1\leq i\leq\ell}\left(\frac{|a_{j_1} + a_{j_2} + \cdots + a_{j_\ell}|}{a_n}\right)^{\frac{1}{n-j_{i}}}.
	\end{equation}
Then $f$ has no zero in the sector $S_{v,\frac{\pi}{n}}$. 
\end{theorem}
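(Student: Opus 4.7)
The plan is a Fujiwara-style decomposition that lets us apply Lemma \ref{douamonoameGenerala} once per negative coefficient. Setting $S = a_{j_1} + \cdots + a_{j_\ell}$ so that $|S| = \sum_{i=1}^\ell |a_{j_i}|$ (all $a_{j_i}$ being negative, so $|S|>0$), I would split the leading coefficient as $a_n = \sum_{i=1}^\ell \tfrac{|a_{j_i}|}{|S|}\, a_n$ and rewrite
\[
f(X) \;=\; \sum_{\substack{0\le i\le n-1 \\ a_i>0}} a_i X^i \;+\; \sum_{i=1}^\ell \frac{|a_{j_i}|}{|S|}\, a_n \left( X^n - \frac{|S|}{a_n}\, X^{j_i} \right).
\]
The weights $|a_{j_i}|/|S|$ are chosen precisely so that the constant $\alpha$ appearing inside each binomial $X^n - \alpha X^{j_i}$ is the \emph{same} value $\alpha = |S|/a_n$; by Lemma \ref{douamonoameGenerala}(ii) the $i$-th binomial has strictly positive imaginary part on $S_{(|S|/a_n)^{1/(n-j_i)},\,\pi/n}^+$, whose vertex is exactly the $i$-th term in the maximum defining $v$.

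Next I would show $\Im f(z) > 0$ for $z \in S_{v,\pi/n}^+$. The key preliminary observation is that for $z = v + \rho e^{i\varphi} \in S_{v,\pi/n}^+$ with $v > 0$, one has $0 < \arg z < \varphi < \pi/n$, hence $\arg(z^i) \in (0,\pi)$ for every $1 \le i \le n$, so each monomial $a_i z^i$ with $a_i > 0$ contributes a nonnegative imaginary part. Since $v \ge (|S|/a_n)^{1/(n-j_i)}$ for every $i$ by construction, the inclusion $S_{v,\pi/n}^+ \subseteq S_{(|S|/a_n)^{1/(n-j_i)},\,\pi/n}^+$ holds, and Lemma \ref{douamonoameGenerala}(ii) then yields $\Im\bigl(z^n - \tfrac{|S|}{a_n} z^{j_i}\bigr) > 0$ for every $i$. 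Summing the two groups of contributions produces $\Im f(z) > 0$, so $f(z)\neq 0$ throughout $S_{v,\pi/n}^+$.

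The lower sector is handled by the mirror argument using the $S^-$ half of Lemma \ref{douamonoameGenerala}, giving $\Im f(z) < 0$ on $S_{v,\pi/n}^-$. For real $z > v$, each factor $z^{n-j_i} - |S|/a_n$ is strictly positive (since $z > v \ge (|S|/a_n)^{1/(n-j_i)}$), so each binomial term is positive, and together with the nonnegative contribution of the remaining monomials this yields $f(z) > 0$. The three cases together rule out any zero of $f$ in $S_{v,\pi/n}$. The step I expect to be subtlest is the small geometric inequality $\arg z < \arg(z-v)$ for $v > 0$ — without it, the positive-coefficient monomials in the decomposition would not be guaranteed to cooperate with the binomial pieces — while the real combinatorial cleverness is the choice of weights $|a_{j_i}|/|S|$, which uniformizes $\alpha$ across all $\ell$ binomials and thereby makes the resulting Fujiwara-type vertex $v$ sharp for this decomposition.
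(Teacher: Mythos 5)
Your proposal is correct and takes essentially the same route as the paper's proof: the identical decomposition of $f$ into the weighted binomials $\frac{|a_{j_i}|}{|S|}a_n\bigl(X^n-\frac{|S|}{a_n}X^{j_i}\bigr)$ plus the remaining non-negative monomials, with Lemma \ref{douamonoameGenerala} applied to each piece and the upper sector, lower sector, and positive real ray treated separately. The only cosmetic differences are that the paper first carries general weights $\lambda_k$ (recorded separately as Theorem \ref{SectorGenerala2}) before specializing to $\lambda_k=|a_k|/\sum_{a_k<0}|a_k|$, and it disposes of the real ray via the real-part half of Lemma \ref{douamonoameGenerala} rather than by your direct factorization $z^{j_i}(z^{n-j_i}-|S|/a_n)$.
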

\begin{proof} If all the coefficients of $f$ are non-negative, the result follows immediately from Lemma \ref{lemaImaginar(f)}, so we may assume that there is at least one negative coefficient. The strategy is to break the polynomial $f$ into a sum of polynomials of the same form as in Lemma \ref{douamonoameGenerala}. To this end, let $\lambda_0, \ldots, \lambda_{n-1}$ be non-negative real numbers to be chosen later, which satisfy $\lambda_0 + \cdots + \lambda_{n-1} = 1$, and $\lambda_k = 0$ iff $a_{k} \ge 0$. Then we may write $f$ as
	\begin{eqnarray*}
		f(X) & = & \sum_{k = 0}^{n-1} (\lambda_k a_{n} X^n + a_{k} X^{k}) \\
 & = & \sum\limits_{a_{k} < 0} \lambda_k a_{n} \left( X^n - \frac{|a_{k}|}{\lambda_k a_{n}} X^{k} \right) + \sum\limits_{a_{k} \ge 0,\ k<n} a_{k} X^{k}.
	\end{eqnarray*}
By Lemma \ref{douamonoameGenerala} i) we first see that $z^n - \frac{|a_{k}|}{\lambda_k a_{n}} z^{k}$ has positive real part in the sector $S_{v_k,\frac{\pi}{2n}}$ with 
\begin{equation}\label{vk}
v_k=\left(\frac{|a_{k}|}{\lambda_k a_{n}}\right)^{1/(n-k)},
\end{equation}
for all $k$ with $a_{k} < 0$, and similarly that $a_{k} z^{k}$ has positive real part in $S_{0,\frac{\pi}{2n}}$ for all $k<n$ with $a_{k}>0$, if such $k$ exist. As a consequence, $\Re (f(z))>0$ for any complex number $z\in S_{w,\frac{\pi}{2n}}$ with
\begin{equation}\label{w}
w=\max\limits _{k:a_k<0} \left(\frac{|a_{k}|}{\lambda_k a_{n}}\right)^{1/(n-k)}.
\end{equation}
In particular, $f$ has no real roots exceeding $w$, so it has no real roots in the sector $S_{w,\frac{\pi}{n}}$. It remains to prove that $f$ cannot have non-real roots in the sector $S_{w,\frac{\pi}{n}}$.
By Lemma \ref{douamonoameGenerala} ii) we then see that for any $k$ with $a_k<0$, the polynomial $z^n - \frac{|a_{k}|}{\lambda_k a_{n}} z^{k}$ has positive imaginary part for any $z$ in the sector $S_{v_k,\frac{\pi}{n}}^+$, and has negative imaginary part for any $z$ in the sector $S_{v_k,\frac{\pi}{n}}^-$, with $v_k$ given by (\ref{vk}). Besides, for all $k<n$ with $a_k>0$ (if any), $a_{k} z^{k}$ has positive imaginary part for any $z$ in the sector $S_{0,\frac{\pi}{n}}^+$, and negative imaginary part in the sector $S_{0,\frac{\pi}{n}}^-$. Observe now that $S_{w,\frac{\pi }{n}}^+\subseteq S_{v_k,\frac{\pi }{n}}^+$ and $S_{w,\frac{\pi }{n}}^-\subseteq S_{v_k,\frac{\pi }{n}}^-$ for each $k$ with $a_k<0$. Then, since we also have $S_{w,\frac{\pi }{n}}^+\subseteq S_{0,\frac{\pi }{n}}^+$ and $S_{w,\frac{\pi }{n}}^-\subseteq S_{0,\frac{\pi }{n}}^-$, we deduce that $f(z)$ has positive imaginary part on $S_{w,\frac{\pi }{n}}^+$, and negative imaginary part on $S_{w,\frac{\pi }{n}}^-$. Recalling that it has no real roots exceeding $w$, we conclude that $f$ has no roots in the sector $S_{w,\frac{\pi }{n}}$.

 We now find the right $\lambda_k$: one idea is to choose them so that the quotients $\frac{|a_{k}|}{\lambda_k a_{n}}$ do not depend on $k$, i.e. so that $\lambda_k$ are proportional to the $|a_{k}|$ whenever $a_{k} < 0$. Since at least one $a_{k}$ is negative, we can define
	\begin{equation*}
		\lambda_k = \frac{|a_{k}|}{\sum\limits _{a_{k}<0}|a_{k}|}\quad {\rm if}\ a_{k}<0,
	\end{equation*}
and $\lambda _k = 0$ otherwise. These $\lambda _k$ indeed sum up to one, and by plugging their expressions into (\ref{w}) we obtain in this case the value
	\begin{equation*}
		w=\max\limits _{1\leq i\leq \ell}\left(\frac{|a_{j_1}+a_{j_2}+\cdots +a_{j_\ell}|}{a_n}\right)^{\frac{1}{n-j_i}},
	\end{equation*}
which is precisely the expression of $v$ in the statement of the theorem.
\end{proof}
\begin{remark}\label{MaximLaCapete}
Let us denote $|a_{j_1}+\cdots +a_{j_{\ell}}|$ by $L_{-}(f)$. We notice that the maximum in (\ref{alaFujiwara}) is attained for $i=1$ if $a_n\geq L_{-}(f)$, and for $i=\ell$ if $a_n<L_{-}(f)$, hence it may be simply written as
\[
\max\left\{ \left(\frac{L_{-}(f)}{a_n}\right)^{\frac{1}{n-j_1}}, \left(\frac{L_{-}(f)}{a_n}\right)^{\frac{1}{n-j_\ell}}\right\} .
\]
\end{remark}
As one can see in the proof of Theorem \ref{SectorGenerala1}, we actually proved the following more general result, that depends on a suitable set of parameters.
\begin{theorem} \label{SectorGenerala2}
Let $f(X) = a_n X^n + \cdots + a_1 X + a_0$ be a polynomial with real coefficients, with $a_n > 0$, and denote by $j_1 < j_2 < \cdots < j_\ell$ the indices $j$ for which $a_j < 0$. Let also $\lambda _1,\dots ,\lambda _\ell $ be arbitrary positive real numbers with $\lambda _1+\cdots +\lambda _\ell =1$, and define
	\begin{equation}\label{alaFujiwara2}
		v=\max \limits _{1\leq i\leq\ell}\left(\frac{|a_{j_i}|}{\lambda _ia_n}\right)^{\frac{1}{n-j_{i}}}.
	\end{equation}
Then $f$ has no zero in the  sector $S_{v,\frac{\pi}{n}}$.
\end{theorem}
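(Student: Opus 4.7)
The plan is to recycle essentially the entire strategy of the proof of Theorem \ref{SectorGenerala1}, stopping short of the particular choice of $\lambda_k$ proportional to $|a_k|$. Concretely, I would first extend the given $\lambda_1,\dots,\lambda_\ell$ to a vector $(\lambda_0,\dots,\lambda_{n-1})$ indexed by all exponents $k<n$ by setting $\lambda_k=0$ whenever $a_k\ge 0$ (and $\lambda_{j_i}$ equal to the given $\lambda_i$ for each $i$). By hypothesis these still sum to $1$, so one may write the decomposition
\[
f(X)=\sum_{a_k<0}\lambda_k a_n\!\left(X^n-\frac{|a_k|}{\lambda_k a_n}X^k\right)+\sum_{\substack{a_k\ge 0\\ k<n}}a_k X^k,
\]
which is the same splitting that appeared in the proof of Theorem \ref{SectorGenerala1}.

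Next, I would set $v_{k}=\bigl(|a_{k}|/(\lambda_{k}a_n)\bigr)^{1/(n-k)}$ for each $k$ with $a_k<0$, and put $v=\max_{a_k<0} v_k$; this $v$ is exactly the quantity in (\ref{alaFujiwara2}) once one rewrites the max over $i$ using the indices $j_i$. Lemma \ref{douamonoameGenerala} i) then gives $\Re\bigl(z^n-\frac{|a_k|}{\lambda_k a_n}z^k\bigr)>0$ on $S_{v_k,\pi/(2n)}$ for every such $k$, and Lemma \ref{lemaReal(f)} gives $\Re(a_k z^k)>0$ on $S_{0,\pi/(2n)}$ for every non-negative $a_k$ with $k<n$; since $S_{v,\pi/(2n)}\subseteq S_{v_k,\pi/(2n)}\cap S_{0,\pi/(2n)}$ and $a_n>0$, summing the pieces yields $\Re f(z)>0$ on $S_{v,\pi/(2n)}$. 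In particular $f$ has no real root in $(v,\infty)$.

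For the non-real part of the sector, I would invoke Lemma \ref{douamonoameGenerala} ii) on each binomial piece and Lemma \ref{lemaImaginar(f)} on the non-negative monomials, combined with the inclusions
\[
S_{v,\pi/n}^{+}\subseteq S_{v_k,\pi/n}^{+}\cap S_{0,\pi/n}^{+},\qquad S_{v,\pi/n}^{-}\subseteq S_{v_k,\pi/n}^{-}\cap S_{0,\pi/n}^{-},
\]
to conclude $\Im f(z)>0$ on $S_{v,\pi/n}^{+}$ and $\Im f(z)<0$ on $S_{v,\pi/n}^{-}$. Together with the absence of real roots greater than $v$, this shows $f$ does not vanish anywhere in $S_{v,\pi/n}$, which is exactly the claim.

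The proof really presents no genuine obstacle beyond the bookkeeping that was already carried out for Theorem \ref{SectorGenerala1}; the only point worth being careful about is that Lemma \ref{douamonoameGenerala} still applies to the binomial $X^n-\alpha X^{k}$ when $\alpha=|a_k|/(\lambda_k a_n)$ (this needs $\lambda_k>0$, which is guaranteed precisely on the indices $j_i$ where $a_k<0$), and that taking $v$ as the maximum over these indices of $v_k$ is the correct way to make all the sector inclusions go the right direction.
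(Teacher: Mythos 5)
Your proposal is correct and follows exactly the route the paper intends: the paper's ``proof'' of Theorem \ref{SectorGenerala2} is simply the observation that the argument for Theorem \ref{SectorGenerala1} already establishes the result for an arbitrary choice of positive $\lambda_1,\dots,\lambda_\ell$ summing to $1$, before the final specialization $\lambda_i=|a_{j_i}|/\sum|a_{j_k}|$. Your write-up reproduces that argument faithfully, including the correct handling of the sector inclusions and the requirement $\lambda_k>0$ on the indices with $a_k<0$.
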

In an attempt to minimize the maximum in (\ref{alaFujiwara2}), one may choose different sets of parameters $\lambda _1,\dots ,\lambda _\ell $. For instance one may take $\lambda _i=\frac{1}{\ell}$ for each $i$, or $\lambda _i=\frac{1}{2^{\ell }}\cdot {\ell \choose i}$ for $i=1,\dots ,\ell $, or some other $\ell$-tuples of parameters that take into account the specific values of the negative coefficients $a_{j_1}, \dots ,a_{j_\ell}$. 

A result stronger than Theorem \ref{SectorGenerala1} can be obtained, if instead of $a_n$ we consider successively all the positive coefficients $a_j$ with $j>j_{\ell}$, and then take the minimum of the vertices of the sectors that we obtain. This situation is illustrated in the following result.
\begin{theorem} \label{ceamaigenerala}
Let $f(X) = a_n X^n + \cdots + a_1 X + a_0$ be a polynomial with real coefficients, with $a_n > 0$, and denote by $j_1 < j_2 < \cdots < j_\ell$ the indices $j$ for which $a_j < 0$, and by $k_1<k_2<\cdots <k_t=n$ the indices $k>j_{\ell }$ for which $a_k>0$. Let
	\begin{equation}\label{SectoralaFujiwaraGenerala}
		v=\min_{1\leq r\leq t} \ 
\max \limits _{1\leq i\leq\ell}\left(\frac{|a_{j_1} + a_{j_2} + \cdots + a_{j_\ell}|}{a_{k_r}}\right)^{\frac{1}{k_r-j_{i}}}.
	\end{equation}
Then $f$ has no zero in the sector $S_{v,\frac{\pi}{n}}$. 
\end{theorem}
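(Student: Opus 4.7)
My plan is to iterate the argument of Theorem \ref{SectorGenerala1}, but once for each positive coefficient $a_{k_r}$ with $k_r > j_\ell$, and then take the intersection (equivalently, minimum-vertex union) of the resulting zero-free sectors. Fix $r \in \{1,\dots,t\}$. Since $k_r > j_\ell \ge j_i$ for every $i$, I can perform the same balancing trick as in the proof of Theorem \ref{SectorGenerala1}, using $a_{k_r}X^{k_r}$ in place of $a_nX^n$. Choose positive weights $\lambda_1,\dots,\lambda_\ell$ with $\lambda_1+\cdots+\lambda_\ell = 1$ and write
\[
f(X) = \sum_{i=1}^{\ell} \lambda_i a_{k_r}\!\left(X^{k_r} - \frac{|a_{j_i}|}{\lambda_i a_{k_r}} X^{j_i}\right) + \sum_{\substack{0\le k\le n,\ k\ne k_r \\ a_k\ge 0}} a_k X^k.
\]
The identity is direct: the $X^{k_r}$ coefficient sums to $a_{k_r}$ (since the $\lambda_i$ sum to $1$), the $X^{j_i}$ coefficient becomes exactly $a_{j_i}$, and every other monomial is carried over unchanged and with a non-negative coefficient.

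Next, I apply Lemma \ref{douamonoameGenerala} (with its ``$n$'' taken to be $k_r$) to each binomial $X^{k_r} - \frac{|a_{j_i}|}{\lambda_i a_{k_r}}X^{j_i}$, obtaining that it has positive real part on $S_{v_{r,i},\pi/(2k_r)}$ and the appropriate sign of imaginary part on $S_{v_{r,i},\pi/k_r}^{\pm}$, where
\[
v_{r,i} = \left(\frac{|a_{j_i}|}{\lambda_i a_{k_r}}\right)^{\frac{1}{k_r - j_i}}.
\]
For the remaining non-negative monomials $a_k X^k$, Lemmas \ref{lemaReal(f)} and \ref{lemaImaginar(f)} give the analogous statements on $S_{0,\pi/(2k)}$ and $S_{0,\pi/k}^{\pm}$, which in turn contain $S_{0,\pi/(2n)}$ and $S_{0,\pi/n}^{\pm}$. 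Crucially, since $k_r \le n$, we have the inclusions
\[
S_{w_r,\pi/n}^{+} \subseteq S_{v_{r,i},\pi/k_r}^{+} \quad\text{and}\quad S_{w_r,\pi/n}^{-} \subseteq S_{v_{r,i},\pi/k_r}^{-},
\]
where $w_r := \max_{1\le i\le \ell} v_{r,i}$. Summing the real-part and imaginary-part inequalities exactly as in the proof of Theorem \ref{SectorGenerala1} shows that $\Re f(z) > 0$ on $S_{w_r,\pi/(2n)}$ (so $f$ has no real zero exceeding $w_r$), and that $\Im f(z)$ is nonzero with a definite sign on $S_{w_r,\pi/n}^{\pm}$; hence $f$ has no zero in $S_{w_r,\pi/n}$.

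Now I optimize the parameters inside each $r$, by the same choice that led to \eqref{alaFujiwara}: take $\lambda_i = |a_{j_i}|/L_{-}(f)$, where $L_{-}(f) = |a_{j_1}+\cdots+a_{j_\ell}|$. Then $v_{r,i} = (L_{-}(f)/a_{k_r})^{1/(k_r-j_i)}$, so
\[
w_r = \max_{1\le i\le \ell}\left(\frac{|a_{j_1}+a_{j_2}+\cdots+a_{j_\ell}|}{a_{k_r}}\right)^{\frac{1}{k_r-j_i}}.
\]
Finally, since the sectors $S_{w_r,\pi/n}$ (with common angle and vertices on the real axis) satisfy $S_{w_r,\pi/n} \supseteq S_{w_{r'},\pi/n}$ whenever $w_r \le w_{r'}$, their union is $S_{v,\pi/n}$ with $v = \min_{1\le r\le t} w_r$, matching \eqref{SectoralaFujiwaraGenerala}. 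Since $f$ is zero-free on every $S_{w_r,\pi/n}$, it is zero-free on their union $S_{v,\pi/n}$, which proves the theorem.

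The only delicate point I anticipate is the mismatch between the degree $k_r$ of each auxiliary binomial and the angle $\pi/n$ appearing in the final statement; this is resolved by the inclusion of angular sectors $S_{\cdot,\pi/n} \subseteq S_{\cdot,\pi/k_r}$, valid precisely because $k_r \le n$. Everything else is a bookkeeping extension of the argument already carried out for Theorem \ref{SectorGenerala1}.
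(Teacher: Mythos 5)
Your proposal is correct and follows essentially the same route as the paper: the same decomposition of $f$ into binomials $X^{k_r}-\frac{|a_{j_i}|}{\lambda_i a_{k_r}}X^{j_i}$ plus non-negative monomials, the same choice $\lambda_i=|a_{j_i}|/L_{-}(f)$, and the same reduction to Lemma \ref{douamonoameGenerala} with $k_r$ in place of $n$, followed by taking the union (minimum vertex) over $r$. You simply make explicit the angle bookkeeping ($S_{\cdot,\pi/n}\subseteq S_{\cdot,\pi/k_r}$ since $k_r\leq n$) that the paper leaves implicit.
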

\begin{proof}
Here we may assume again that $f$ has at least one negative coefficient, so for an index $k_r$ we may write $f$ as
	\begin{eqnarray*}
		f(X) & = & \sum_{i = 1}^{\ell} (\lambda_i a_{k_r} X^{k_r} + a_{j_i} X^{j_i}) + \sum\limits_{a_{k} \ge 0,\ k\neq k_r} a_{k} X^{k}\\
 & = & \sum\limits_{i=1}^\ell \lambda_i a_{k_r} \left( X^{k_r} - \frac{|a_{j_i}|}{\lambda_i a_{k_r}} X^{j_i} \right) + \sum\limits_{a_{k} \ge 0,\ k\neq k_r} a_{k} X^{k},
	\end{eqnarray*}
with $\lambda _{i}=|a_{j_i}|/(|a_{j_1}+\cdots +a_{j_{\ell }}|)$ for each $i=1,\dots ,\ell $.
We may then apply here the same proof as in Theorem \ref{SectorGenerala1}, with $k_r$ instead of $n$, since the polynomials  $X^{k_r} - \frac{|a_{j_i}|}{\lambda_i a_{k_r}} X^{j_i}$ above still have the form required in Lemma \ref{douamonoameGenerala}, as $k_r>j_i$ for each $r=1,\dots ,t$ and each $i=1,\dots ,\ell $.
\end{proof}
When the vertex $v$ given by (\ref{SectoralaFujiwaraGenerala}) is greater than $1$, one way to search for a smaller value is to use instead of the coefficients $a_{k_1},\dots ,a_{k_t}$ their sum, as in the following result.
\begin{theorem} \label{NumitorMare}
Let $f(X) = a_n X^n + \cdots + a_1 X + a_0$ be a polynomial with real coefficients, with $a_n > 0$, and denote by $j_1 < j_2 < \cdots < j_\ell$ the indices $j$ for which $a_j < 0$, and by $k_1<k_2<\cdots <k_t=n$ the indices $k>j_{\ell }$ for which $a_k>0$. Assume that $t\geq 2$ and let
	\begin{equation}\label{SectoralaFujiwaraGenerala2}
		v=\max \left \{ 1, 
\max \limits _{1\leq i\leq\ell}\left(\frac{|a_{j_1} + a_{j_2} + \cdots + a_{j_\ell}|}{a_{k_1}+a_{k_{2}}+\cdots +a_{k_t}}\right)^{\frac{1}{k_1-j_{i}}}\right \} .
	\end{equation}
Then $f$ has no zero in the sector $S_{v,\frac{\pi}{n}}$. 
\end{theorem}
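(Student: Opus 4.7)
The plan is to mimic the proof of Theorem \ref{ceamaigenerala}, but to redistribute the \emph{entire} sum $S := a_{k_1}+\cdots+a_{k_t}$ of positive high-index coefficients among the negative terms, rather than only one $a_{k_r}$ at a time. Set $L_{-}(f) = |a_{j_1}+\cdots+a_{j_\ell}| = \sum_i |a_{j_i}|$ and $\lambda_i = |a_{j_i}|/L_{-}(f)$, so that $\lambda_1+\cdots+\lambda_\ell = 1$. Using the telescoping identity $\sum_{r=1}^{t} a_{k_r} X^{k_r} = S\,X^{k_1} + \sum_{r=2}^{t} a_{k_r}\,(X^{k_r}-X^{k_1})$, I would decompose
\begin{equation*}
f(X) = \sum_{i=1}^{\ell} \lambda_i S\left(X^{k_1} - \frac{L_{-}(f)}{S}\,X^{j_i}\right) + \sum_{r=2}^{t} a_{k_r}\left(X^{k_r} - X^{k_1}\right) + \sum_{\substack{k \leq j_\ell \\ a_k > 0}} a_k X^k.
\end{equation*}

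Each binomial $X^{k_1} - (L_{-}(f)/S)\,X^{j_i}$ in the first sum is exactly of the form covered by Lemma \ref{douamonoameGenerala}, with vertex $v_i^{*} := (L_{-}(f)/S)^{1/(k_1-j_i)}$ and sector angle $\pi/k_1$; and $\max_i v_i^{*}$ is precisely the inner maximum in (\ref{SectoralaFujiwaraGenerala2}), hence $\leq v$. Each difference $X^{k_r}-X^{k_1}$ in the middle sum is also of that form, but with $\alpha=1$, so its natural vertex is $1$ and its sector angle is $\pi/k_r$; this is exactly why the definition of $v$ includes the safeguard $v \geq 1$. The remaining monomials $a_k X^k$ in the last sum have positive coefficients and degrees at most $j_\ell < n$, so they are handled by Lemma \ref{lemaImaginar(f)}.

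I would then verify, just as in the proof of Theorem \ref{SectorGenerala1}, the containments $S_{v,\pi/n}^{+} \subseteq S_{v_i^{*},\pi/k_1}^{+}$, $S_{v,\pi/n}^{+} \subseteq S_{1,\pi/k_r}^{+}$, and $S_{v,\pi/n}^{+} \subseteq S_{0,\pi/n}^{+}$, each of which follows from the inequalities $v \geq v_i^{*}$, $v \geq 1$, $k_1 < n$, and $k_r \leq n$ together with the elementary fact that sliding the vertex leftward along the real axis and widening the opening angle enlarges a sector. Adding the resulting strict inequalities $\Im > 0$ summand by summand yields $\Im f(z) > 0$ on $S_{v,\pi/n}^{+}$; the argument on $S_{v,\pi/n}^{-}$ is symmetric, and the positivity $f(z) > 0$ for real $z > v$ follows identically from Lemma \ref{douamonoameGenerala}(i). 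Hence $f$ has no zero in $S_{v,\pi/n}$.

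The only step that is not essentially bookkeeping is identifying the correct telescoping decomposition and recognizing that the ``leftover'' pieces $X^{k_r}-X^{k_1}$ are themselves instances of the binomials in Lemma \ref{douamonoameGenerala} with $\alpha=1$. The hypothesis $v \geq 1$ built into (\ref{SectoralaFujiwaraGenerala2}) is not cosmetic: it is precisely what allows the application of Lemma \ref{douamonoameGenerala} to these pieces on the sector $S_{v,\pi/n}$, and without it the middle sum could not be controlled.
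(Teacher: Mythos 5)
Your proposal is correct and follows essentially the same route as the paper: the same telescoping rewriting of $\sum_r a_{k_r}X^{k_r}$ as $S\,X^{k_1}+\sum_{r\geq 2}a_{k_r}(X^{k_r}-X^{k_1})$, the same weights $\lambda_i=|a_{j_i}|/L_{-}(f)$ applied to the total $d=a_{k_1}+\cdots+a_{k_t}$, and the same observation that the leftover differences $X^{k_r}-X^{k_1}$ fall under Lemma \ref{douamonoameGenerala} with $\alpha=1$, which is exactly why the vertex is capped below by $1$. Nothing to add.
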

\begin{proof}
Here we may write $f$ as
\begin{equation}\label{PrimaScriere}
f(X) =  \sum_{i=1}^{t}a_{k_i} X^{k_i} +\sum_{i = 1}^{\ell} a_{j_i} X^{j_i}+ \sum\limits_{a_{k} > 0,\ k<k_1} a_{k} X^{k}.
\end{equation}
Note that we may also write the first sum in (\ref{PrimaScriere}) as 
\[
(a_{k_1}+\cdots +a_{k_t})X^{k_1}+a_{k_2}(X^{k_2}-X^{k_1})+\cdots +a_{k_t}(X^{k_t}-X^{k_1}),
\]
so denoting $a_{k_1}+\cdots +a_{k_t}$ by $d$ we have
	\begin{eqnarray*}
		f(X) & = & dX^{k_1}+\sum_{i = 1}^{\ell} a_{j_i} X^{j_i}+ \sum_{i=2}^{t}a_{k_i}(X^{k_i}-X^{k_1})+\sum\limits_{a_{k} > 0,\ k<k_1} a_{k} X^{k}\\
 & = & \sum\limits_{i=1}^\ell \lambda_i d \left( X^{k_1} - \frac{|a_{j_i}|}{\lambda_i d} X^{j_i} \right) + \sum_{i=2}^{t}a_{k_i}(X^{k_i}-X^{k_1})+ \sum\limits_{a_{k} > 0,\ k<k_1} a_{k} X^{k},
	\end{eqnarray*}
with $\lambda _{i}=|a_{j_i}|/(|a_{j_1}+\cdots +a_{j_{\ell }}|)$ for each $i=1,\dots ,\ell $. Note that here we assigned no $\lambda _i$ to the terms $a_{k_i}(X^{k_i}-X^{k_1})$ that appear in the middle sum above, even if they are of the form in Lemma \ref{douamonoameGenerala}. However, we know from Lemma \ref{douamonoameGenerala} that each of these terms has positive real part in the sector $S_{1,\frac{\pi}{2n}}$, has positive imaginary part in the sector $S_{1,\frac{\pi}{n}}^+$, and negative imaginary part in the sector $S_{1,\frac{\pi}{n}}^-$. The proof then continues as in the case of Theorem \ref{SectorGenerala1}, with $d$ instead of $a_n$ and $k_1$ instead of $n$, the only difference being that when determining the right-most vertex of the sectors that appear, we have to also take into account the sectors $S_{1,\frac{\pi}{2n}}$, $S_{1,\frac{\pi}{n}}^+$, and $S_{1,\frac{\pi}{n}}^-$, that correspond to the polynomials $X^{k_i}-X^{k_1}$, $i=2,\dots ,t$.
\end{proof}
\begin{remark}\label{t=1}
We note that one may use (\ref{SectoralaFujiwaraGenerala2}) for the case that $t=1$ as well, but the presence of $1$ in the maximum in (\ref{SectoralaFujiwaraGenerala2}) gives in this case a result weaker than Theorem \ref{SectorGenerala1}. However, in order to simplify the analysis in Theorem \ref{VariatiiDeSemn} where we will take into account all the sign variations of $f$, we will use (\ref{SectoralaFujiwaraGenerala2}) for $t=1$ as well.
\end{remark}
%In particular, from Theorem \ref{NumitorMare} we obtain the following corollary.
\begin{corollary} \label{CoroInegalitatea1}
Let $f(X) = a_n X^n + \cdots + a_1 X + a_0$ be a polynomial with real coefficients, with $a_n > 0$, and denote by $j_1 < j_2 < \cdots < j_\ell$ the indices $j$ for which $a_j < 0$, and by $k_1<k_2<\cdots <k_t=n$ the indices $k>j_{\ell }$ for which $a_k>0$. If 
\begin{equation}\label{InegalitateaDintreSume}
a_{k_1}+a_{k_{2}}+\cdots +a_{k_t}\geq |a_{j_1} + a_{j_2} + \cdots + a_{j_\ell}|,
\end{equation}
then $f$ has no zero in the sector $S_{1,\frac{\pi}{n}}$. 
\end{corollary}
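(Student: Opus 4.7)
The plan is to deduce this as a direct specialization of Theorem \ref{NumitorMare}, together with the observation in Remark \ref{t=1} that (\ref{SectoralaFujiwaraGenerala2}) applies when $t=1$ as well. First I would dispose of the degenerate case $\ell=0$ (no negative coefficients), where Lemma \ref{lemaImaginar(f)} immediately gives that $f$ has no zero in $S_{0,\pi/n}\supseteq S_{1,\pi/n}$; so from now on assume $\ell\geq 1$, and hence $a_{k_t}=a_n>0$ guarantees $t\geq 1$.

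Next, I would examine the formula (\ref{SectoralaFujiwaraGenerala2}) under hypothesis (\ref{InegalitateaDintreSume}). Since $k_1>j_\ell\geq j_i$ for every $1\leq i\leq \ell$, each exponent $\frac{1}{k_1-j_i}$ is a positive real number. The inequality (\ref{InegalitateaDintreSume}) gives
\[
\frac{|a_{j_1}+a_{j_2}+\cdots +a_{j_\ell}|}{a_{k_1}+a_{k_2}+\cdots +a_{k_t}}\leq 1,
\]
and raising a number in $[0,1]$ to any positive power keeps it in $[0,1]$. Consequently every term in the inner maximum of (\ref{SectoralaFujiwaraGenerala2}) is at most $1$, so the outer maximum equals $1$, i.e. $v=1$.

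Finally I would invoke Theorem \ref{NumitorMare} (for $t\geq 2$) or the same statement extended to $t=1$ via Remark \ref{t=1} to conclude that $f$ has no zero in $S_{1,\pi/n}$. There is essentially no obstacle here: the entire content of the corollary is the observation that hypothesis (\ref{InegalitateaDintreSume}) is precisely what forces the expression (\ref{SectoralaFujiwaraGenerala2}) to collapse to its trivial lower bound $1$. The only minor care needed is to make sure the $t=1$ case is handled by appealing to Remark \ref{t=1}, rather than to Theorem \ref{NumitorMare} directly (whose statement requires $t\geq 2$), so that the uniform conclusion holds for every admissible sign pattern of the coefficients of $f$.
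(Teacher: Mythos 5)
Your proposal is correct and is precisely the argument the paper intends: the corollary is stated without proof as an immediate specialization of Theorem \ref{NumitorMare} (extended to $t=1$ via Remark \ref{t=1}), since under (\ref{InegalitateaDintreSume}) the ratio in (\ref{SectoralaFujiwaraGenerala2}) is at most $1$ and hence $v=1$. Your explicit handling of the degenerate case $\ell=0$ via Lemma \ref{lemaImaginar(f)} is a sensible extra precaution but does not change the substance.
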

%Note that by also looking at $-f$, which has the same roots as $f$, one may rephrase Corollary \ref{CoroInegalitatea1} in the following more general form, that also covers the case when $a_n<0$:
%\begin{corollary} \label{CoroInegalitatea2}
%Let $f(X) = a_n X^n + \cdots + a_1 X + a_0$ be a polynomial with real coefficients, and $j$ be the largest index where a sign change between consecutive (non-zero) coefficients $a_j$ occurs. If 
%\[
%\sum_{i\geq j}|a_i|\geq \sum_{i<j,\ a_ia_n<0}|a_i|,
%\]
%then $f$ has no zero in the sector $S_{1,\frac{\pi}{n}}$. 
%\end{corollary}
We will also prove a result that takes into account all the sign changes between consecutive (non-zero) coefficients of $f$. To state our result, we will first introduce some notations. 

For a polynomial $f(X) = a_n X^n + \cdots + a_1 X + a_0$ with real coefficients, $a_n>0$, we will consider the partition of its \textit{non-zero} coefficients into maximal ``connected" sequences containing only positive coefficients, or only negative coefficients, as illustrated below.  
\begin{equation}\label{FiguraCuSemne}
\underbrace{\stackrel{a_{P_1}}+\cdots\stackrel{a_{p_1}}+}_{S_1^+} \  \underbrace{\stackrel{a_{N_1}}-\cdots\stackrel{a_{n_1}}-}_{S_1^-} \  \underbrace{\stackrel{a_{P_2}}+\cdots\stackrel{a_{p_2}}+}_{S_2^+} \  \underbrace{\stackrel{a_{N_2}}-\cdots\stackrel{a_{n_2}}-}_{S_2^-} \;\; \cdots \;\; \underbrace{\pm\cdots\pm}_{S_k^\pm}
\end{equation}
Here $n=P_1\geq p_1>N_1\geq n_1>P_2\geq p_2>N_2\geq n_2>\cdots $ and $S_{j}^{+}=\sum\limits _{p_j\leq i\leq P_j}|a_i|$ while $S_{j}^{-}=\sum\limits _{n_j\leq i\leq N_j}|a_i|$ for each $j$.

For instance, for the polynomial $2X^9 + 5X^8 - 7X^5 - 3X^3 + X^2 - 8X - 1$ which has three sign changes, we have $k = 2$, $S_1^+ = 2 + 5 = 7$, $S_1^- = 7 + 3 = 10$, $S_2^+ = 1$, and $S_2^- = 8 + 1 = 9$.

If the number of sign changes between consecutive coefficients of $f$ is $s\geq 1$, say, then by (\ref{FiguraCuSemne}) we may write $f$ as $f(X)=g_1(X)+g_2(X)+\cdots +g_k(X)$ with $k=\lfloor \frac{s}{2}\rfloor +1$ and
\[
g_j(X)=\sum\limits _{p_j\leq i\leq P_j}a_iX^i+\sum\limits _{n_j\leq i\leq N_j}a_iX^i.
\]
The polynomials $g_j$ have a single sign change, except possibly for $g_k$, which might have no sign changes at all (i.e. in the case that $s$ is even). In this latter case, $g_k$ reduces to $g_k(X)=\sum\limits _{p_k\leq i\leq P_k}a_iX^i$, having only non-negative coefficients, and moreover, in this case it may even reduce to a constant, namely $a_0$. By Theorem \ref{NumitorMare} we see that a non-constant polynomial $g_j$ as above has no zeros in the sector $S_{v_j,\frac{\pi}{\deg g_j}}$ with
\begin{equation}\label{varfulvj}
v_j=\max \left\{1,\max\limits _{n_j\leq i\leq N_j}\left (\frac{S^{-}_j}{S^{+}_j}\right )^{\frac{1}{p_j-i}}\right\}.
\end{equation}

With these notations we have the following result.

\begin{theorem}\label{VariatiiDeSemn} A polynomial $f(X)= a_n X^n + \cdots + a_1 X + a_0$ of degree $n\geq 1$ with $a_n>0$ and having $s\geq 1$ sign changes between consecutive coefficients, cannot have zeros in the sector  $S_{v,\frac{\pi}{n}}$ with
$v=\max\{v_{1},\dots ,v_{\lfloor \frac{s}{2}\rfloor+1}\}$
%\[
%v = 
%   \begin{cases}
%	\max \{v_{1},\dots ,v_{k}\},	& \text{if }\ k \ \textit{is odd},\\
%\max \{v_{1},\dots ,v_{k-1}\}, & \text{if }\ k \ \textit{is even},
%   \end{cases}
%\]
and $v_j$ given by {\em (\ref{varfulvj})}.
\end{theorem}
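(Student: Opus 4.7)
My plan is to exploit the decomposition $f = g_1 + g_2 + \cdots + g_k$ introduced just before the theorem, apply Theorem \ref{NumitorMare} to each summand, and then combine the resulting sign information. For every non-constant $g_j$, Theorem \ref{NumitorMare} (supplemented by Remark \ref{t=1} in the case where its positive block has only one term) implies that $g_j$ has no zero in $S_{v_j, \pi/\deg g_j}$, with $v_j$ as in (\ref{varfulvj}). Tracing through the proof of that theorem, one actually obtains the sharper statements $\Im(g_j(z)) > 0$ on $S_{v_j,\pi/\deg g_j}^{+}$, $\Im(g_j(z)) < 0$ on $S_{v_j,\pi/\deg g_j}^{-}$, and $g_j(x) > 0$ for real $x > v_j$ (the last using the positivity of the leading coefficient $a_{P_j}$ of $g_j$, which forces $g_j(x) \to +\infty$ together with the zero-free statement on $(v_j,\infty)$). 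For the case $s$ even where $g_k$ may have only non-negative coefficients, Lemma \ref{lemaImaginar(f)} yields the same imaginary-part signs, with vertex $0 \le v_k$; if moreover $g_k$ reduces to the positive constant $a_0$, it simply contributes $\Im(g_k) = 0$ and $g_k > 0$.

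Next I would verify the geometric inclusions $S_{v,\pi/n}^{\pm} \subseteq S_{v_j,\pi/\deg g_j}^{\pm}$ for every $j$. By definition $v \geq v_j$, and $\deg g_j \leq n$ gives $\pi/\deg g_j \geq \pi/n$. Writing $z - v_j = (z-v) + (v - v_j)$ with $v - v_j \geq 0$ real, one sees that shifting the vertex leftward from $v$ to $v_j$ can only decrease $|\arg(z-v_j)|$, provided the argument of $z - v$ already lies in $(-\pi/2,\pi/2)$. This is automatic when $\pi/n \leq \pi/2$, that is, when $n \geq 2$. The case $n = 1$ is trivial: then $f = a_1 X + a_0$ with $a_1 > 0$, and $v = v_1 \geq |a_0|/a_1$ already exceeds the unique real root of $f$.

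Finally, for any $z \in S_{v,\pi/n}^{+}$, the previous two paragraphs give $\Im(g_j(z)) \geq 0$ for every $j$, with strict inequality for at least $g_1$ (which contains the leading term $a_n X^n$ and is certainly non-constant). Summing yields $\Im(f(z)) > 0$, so $f(z) \neq 0$. The argument for $z \in S_{v,\pi/n}^{-}$ is symmetric, and for real $x > v$ each $g_j(x) > 0$ sums to $f(x) > 0$, which rules out real zeros on $(v,\infty)$. Together these three cases exhaust $S_{v,\pi/n}$ and yield the theorem.

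The main obstacle, in my view, is the bookkeeping in the first step: one has to extract from the proof of Theorem \ref{NumitorMare} the signed imaginary-part and real-axis positivity statements rather than only the bare zero-free conclusion, so that the summed sign inequalities remain strict and genuinely rule out zeros of $f$ itself. Once this is in place, the inclusion of sectors and the summation over $j$ are routine.
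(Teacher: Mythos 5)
Your proof is correct and follows essentially the same route as the paper: decompose $f$ into the blocks $g_1,\dots,g_k$, apply the method (not merely the bare conclusion) of Theorem \ref{NumitorMare} to each block, and combine. You have in fact made explicit the one point the paper glosses over, namely that one must carry the signed statements $\Im(g_j(z))>0$ on $S^+_{v_j,\pi/\deg g_j}$, $\Im(g_j(z))<0$ on $S^-_{v_j,\pi/\deg g_j}$ and $g_j(x)>0$ for real $x>v_j$ through the summation, since ``intersecting the zero-free sectors'' of the summands would not by itself rule out zeros of their sum.
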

\begin{proof}
We first write $f$ as a sum of polynomials $g_1,\dots ,g_k$ as before, with $k=\lfloor \frac{s}{2}\rfloor+1$ and the $g_i$'s having a single sign change (except possibly for $g_k$ that might have only non-negative coefficients, in case $s$ is even). We may then apply to each $g_i$ the method in the proof of Theorem \ref{NumitorMare} and finally consider the intersection of the zero-free sectors that we obtain. 
\end{proof}
In particular, one obtains the following corollary.
\begin{corollary}\label{CorolarVariatii}
Let $f(X)=a_n X^n + \cdots + a_1 X + a_0$ be a polynomial of degree $n\geq 1$ with real coefficients, $a_n>0$, having $s\geq 1$ sign changes between consecutive coefficients. Assume that $S^{+}_j>S^{-}_j$ for $j=1,\dots ,\lfloor \frac{s}{2}\rfloor+1$. Then $f$ has no zeros in the sector  $S_{1,\frac{\pi}{n}}$.
\end{corollary}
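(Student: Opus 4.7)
The plan is to deduce this immediately from Theorem \ref{VariatiiDeSemn} by showing that under the hypothesis $S_j^+ > S_j^-$ for every $j$, each quantity $v_j$ appearing in formula (\ref{varfulvj}) collapses to $1$. Then $v = \max\{v_1, \dots, v_{\lfloor s/2 \rfloor + 1}\} = 1$, and the zero-free sector of Theorem \ref{VariatiiDeSemn} is exactly $S_{1,\pi/n}$, which is what we want.

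To verify $v_j = 1$, I would inspect the inner maximum
\[
\max_{n_j \leq i \leq N_j}\left(\frac{S_j^-}{S_j^+}\right)^{1/(p_j - i)}.
\]
By construction of the partition (\ref{FiguraCuSemne}) one has $p_j > N_j \geq i$, so each exponent $1/(p_j - i)$ is strictly positive. Since $0 < S_j^-/S_j^+ < 1$ by hypothesis, raising this ratio to any positive power keeps it strictly in $(0, 1)$. Hence the inner maximum is strictly below $1$, the outer maximum in (\ref{varfulvj}) evaluates to exactly $1$, and so $v_j = 1$ for each $j$ in the required range.

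The only care required is the edge case $s$ even, in which the last polynomial $g_k$ (with $k = s/2 + 1$) consists only of non-negative coefficients and has an empty negative block. With the natural convention $S_k^- = 0$, the hypothesis $S_k^+ > S_k^-$ is automatic whenever $g_k$ is non-zero, and Lemma \ref{lemaImaginar(f)} applied directly to $g_k$ yields a zero-free sector containing $S_{1,\pi/n}$, so this term contributes no obstruction to taking vertex $1$ in the intersection argument of Theorem \ref{VariatiiDeSemn}. The argument presents no real obstacle: it is essentially the single observation that raising a number in $(0, 1)$ to a positive power keeps it in $(0, 1)$, together with a light bookkeeping check of the even-$s$ boundary case.
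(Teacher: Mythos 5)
Your argument is correct and is precisely the deduction the paper intends: Corollary \ref{CorolarVariatii} is stated as an immediate consequence of Theorem \ref{VariatiiDeSemn}, and your observation that each $v_j$ in (\ref{varfulvj}) collapses to $1$ because $\left(S_j^-/S_j^+\right)^{1/(p_j-i)}<1$ whenever $S_j^+>S_j^-$ and $p_j-i>0$ is exactly the step left implicit there. Your treatment of the even-$s$ case (where $g_{\lfloor s/2\rfloor+1}$ has no negative block) matches the paper's conventions as well.
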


We will end this section with some considerations on how one can improve the estimates on the location of the roots of a polynomial $f$ by also looking at its reciprocal $\tilde{f}$. Recall that if $f(X)=a_0+a_1X+\cdots +a_nX^n$ is a polynomial with real coefficients of degree $n\geq 1$, $a_0\neq 0$, then $\tilde{f}(X)=X^{n}f(\frac{1}{X})=a_n+a_{n-1}X+\cdots +a_0X^n$. With this notation we have:
\begin{lemma}\label{reciprocul}
Let $f(X)$ be a polynomial with real coefficients of degree $n\geq 3$, $f(0)\neq 0$, and assume that its reciprocal $\tilde{f}$ has no roots in the sector $S_{\tilde{v},\frac{\pi}{n}}$ for some positive real number $\tilde{v}$. Then, $f$ has no zeros in the (open) lens-shaped region $L_{\tilde{v},\frac{\pi}{n}}$ given by
\begin{equation}\label{lentila}
x\in \left( 0,\frac{1}{\tilde{v}}\right),\quad |y|<\frac{-1}{2\tilde{v}\tan{\frac{\pi}{n}}}+\sqrt{\frac{1}{4\tilde{v}^2\sin ^2\frac{\pi}{n}}-\left(x-\frac{1}{2\tilde{v}}\right)^2}.
\end{equation}
\end{lemma}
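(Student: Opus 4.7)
The plan is to apply the inversion $\iota\colon z\mapsto 1/z$ and transfer the hypothesis on $\tilde f$ to a conclusion on $f$. Since $\tilde f(X)=X^{n}f(1/X)$, for any nonzero $z$ one has $\tilde f(1/z)=z^{-n}f(z)$, so the nonzero roots of $f$ correspond bijectively to the nonzero roots of $\tilde f$ under $z\leftrightarrow 1/z$. Thus it suffices to establish the geometric fact that $\iota$ maps $L_{\tilde v,\pi/n}$ into $S_{\tilde v,\pi/n}$; together with the assumption $f(0)\neq 0$ (which keeps $0$ outside both regions) this gives the conclusion.

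To prove the geometric claim, I would observe that the two boundary rays of $S_{\tilde v,\pi/n}$, namely $\{\tilde v+\rho e^{\pm i\pi/n}:\rho\geq 0\}$, lie on lines not passing through the origin; under $\iota$ each such line is sent to a circle through the origin, so each ray is sent to an arc running from $\iota(\tilde v)=1/\tilde v$ to $\iota(\infty)=0$. By the symmetry of the sector with respect to the real axis, the two arcs are reflections of each other. Using an auxiliary point such as $\iota(\tilde v+\tilde v e^{i\pi/n})$, whose coordinates simplify via half-angle identities to $\bigl(\tfrac{1}{2\tilde v},-\tfrac{\tan(\pi/(2n))}{2\tilde v}\bigr)$, one finds that the two circles have centres $\bigl(\tfrac{1}{2\tilde v},\pm\tfrac{1}{2\tilde v\tan(\pi/n)}\bigr)$ and common radius $\tfrac{1}{2\tilde v\sin(\pi/n)}$. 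Solving each circle equation for $y$ and selecting the branch that connects the chord endpoints $0$ and $1/\tilde v$ through the half-plane opposite to the centre, one sees that the two arcs bound precisely the region $L_{\tilde v,\pi/n}$ described in the statement. That the interior of the sector is mapped onto the interior of the lens (as opposed to the exterior) is verified on any test point; e.g.\ $z=\tilde v+1\in S_{\tilde v,\pi/n}$ maps to $\tfrac{1}{\tilde v+1}\in L_{\tilde v,\pi/n}$. Since $\iota$ is an involution, this gives $\iota(L_{\tilde v,\pi/n})=S_{\tilde v,\pi/n}$.

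Given this identification, the rest is immediate: for $z\in L_{\tilde v,\pi/n}$ one has $\Re(z)>0$, so $z\neq 0$ and $\iota(z)\in S_{\tilde v,\pi/n}$. Since $\tilde f$ does not vanish on $S_{\tilde v,\pi/n}$, neither does $\tilde f(1/z)$, and hence $f(z)=z^{n}\tilde f(1/z)\neq 0$. The main obstacle is the explicit identification of the image arcs; the delicate point is tracking which half-plane each arc lies in, since a ray of the sector in the upper half-plane is mapped by $\iota$ into the lower half-plane, and the algebraic simplifications of $(1\pm\cos\theta)/\sin\theta$ must be carried out carefully to match the particular square-root expression in the statement of the lemma.
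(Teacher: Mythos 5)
Your proposal is correct and follows essentially the same route as the paper: both arguments use the root correspondence $z\leftrightarrow 1/z$ between $f$ and $\tilde f$ together with the fact that the inversion $z\mapsto 1/z$ interchanges the sector $S_{\tilde v,\frac{\pi}{n}}$ and the lens $L_{\tilde v,\frac{\pi}{n}}$, with the same circles (centres $\bigl(\tfrac{1}{2\tilde v},\pm\tfrac{1}{2\tilde v}\cot\tfrac{\pi}{n}\bigr)$, radius $\tfrac{1}{2\tilde v\sin\frac{\pi}{n}}$). You merely supply the explicit verification that the paper leaves as ``one may check,'' and your computations are accurate.
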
 
\begin{proof}
Recall that the roots of $\tilde{f}$ are precisely the inverses of the roots of $f$. One may check that
by inversion with respect to the origin of the complex plane ($z\mapsto \frac{1}{z}$) the sector $S_{\tilde{v},\frac{\pi}{n}}$ is mapped into the lens-shaped region $L_{\tilde{v},\frac{\pi}{n}}$ given by (\ref{lentila}) and vice-versa (see Figure \ref{fig2} below). The two open disks, whose intersection is precisely $L_{\tilde{v},\frac{\pi}{n}}$, have radius $\frac{1}{2\tilde{v}\sin \frac{\pi}{n}}$ and centers of coordinates $(\frac{1}{2\tilde{v}},\pm \frac{1}{2\tilde{v}}\cot \frac{\pi}{n})$.
Since none of the roots of $\tilde{f}$ belongs to the sector $S_{\tilde{v},\frac{\pi}{n}}$, $f$ cannot have roots in $L_{\tilde{v},\frac{\pi}{n}}$.
\end{proof}

%\vspace{-0.5em}
\begin{figure}[h]
	\begin{tikzpicture}[>=stealth]
	\draw[->] (-0.5, -1.35) -- (-0.5, 1.35); \draw[->] (-1, 0) -- (7, 0);
	\draw[fill] (-0.5, 0) circle (0.05)  (2.5, 0) circle (0.05);
	\draw[thick, fill=gray, fill opacity = 0.2] (-0.5, 0) arc (120:60:3) arc(-60:-120:3);
	\draw (2.45, -0.08) node[anchor=135] {$\frac 1 {\tilde v}$};
	\draw[dashed] (-0.5, 0) -- (1.232, 1);
	\draw[dashed] (6, 0.8) -- (6.5, 0.975);
	\draw[dashed] (6, -0.8) -- (6.5, -0.975);

	\draw (0.25, 0.05) node[anchor=198] {\scriptsize${\pi}/{n}$};
	\draw (1.2, -0.9) node {$L_{\tilde v, \frac{\pi}{n}}$};
	\begin{scope}[shift={(3, 0)}]
	\draw[fill] (0.7, 0) circle (0.05);
	\draw[thick, fill=gray, fill opacity = 0.2] (3, 0.8) -- (0.7, 0) -- (3, -0.8);
	\draw (0.7, 0) node[anchor=45] {$\tilde{v}$};
	\draw (1.6, 0.05) node[anchor=200] {\scriptsize${\pi}/{n}$};
	\draw (1.8, -0.9) node {$S_{\tilde{v}, \frac{\pi}{n}}$};
	\end{scope}
	\end{tikzpicture}
	\caption{The lens-shaped region $L_{\tilde{v},\frac{\pi}{n}}$ and the sector $S_{\tilde{v}, \frac \pi n}$.}\label{fig2}
\end{figure}
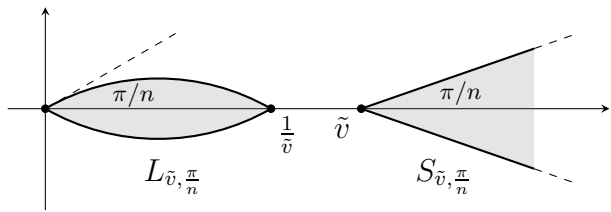
\vspace{-1em}

	%{\bf Figure 2.}\quad The lens-shaped region $L_{\tilde{v},\frac{\pi}{n}}$ and the sector $S_{\tilde{v}, \frac \pi n}$. 

%\medskip

\begin{remark}\label{reuniunea}
As we shall see in the following section, searching for integers $m$ in zero-free regions of $f$ such that $f(m)$ is a prime may be a key factor when we test the irreducibility of $f$. Assume that $f$ is as in Lemma \ref{reciprocul}, and that we know by one of the results in this section that $f$ has no roots in a sector $S_{v,\frac{\pi}{n}}$ for some positive $v$. Then the information provided by Lemma \ref{reciprocul} may be of particular interest in case $\tilde{v}$ is sufficiently small, so that its inverse exceeds $v$. Thus, if $\frac{1}{\tilde{v}}>v$, one may prove that $S_{v,\frac{\pi}{n}}\cap L_{\tilde{v},\frac{\pi}{n}}\neq \emptyset $, and that the union $S_{v,\frac{\pi}{n}}\cup L_{\tilde{v},\frac{\pi}{n}}$ contains the sector $S_{0,\alpha}$ with angle $\alpha <\frac{\pi}{n}$ given by
\[
\alpha =\frac{\pi}{n}-\arctan \frac{\sin \frac{\pi}{n}}{\sqrt{\frac{1}{v\tilde{v}}-\sin^2\frac{\pi}{n}}},
\]
where $f$ has no roots. In particular, this shows that if $v\tilde{v}<1$, then $f$ has no positive real roots.
Furthermore, if $\tilde{v}<1$, the region $L_{\tilde{v},\frac{\pi}{n}}$ might contain points on the real axis with small integer coordinates $m$ such that $f(m)$ is a prime, which are not contained in the sector $S_{v, \frac \pi n}$. These points might be useful in testing the irreducibility of $f$, as we shall see later in Theorem \ref{DiskInLens}. 
\end{remark}

\section{Irreducibility criteria for polynomials with integer coefficients}\label{trei}
Many irreducibility criteria for polynomials $f$ with integer coefficients that use information on the prime factorization of $f(m)$ for some suitable integer argument $m$ often rely on upper bounds $B$ for the absolute values of the roots, usually expressed in terms of the absolute values of the coefficients of $f$. It is well known that if $f(m)$ is a prime number for some integer $m$ with $|m|>B+1$, then $f$ must be irreducible. One might naturally wonder if there exists a bound $B'$ depending only on the degree of $f$, that guarantees the irreducibility of $f$ once we know that $f$ assumes a prime value at an integer $m>B'$. This would be useful, for instance, in the case of polynomials with bounded degree, but whose roots have arbitrarily large absolute values. The aim of this section is to find families of polynomials for which such bounds $B'$ exist. This will be achieved by combining the results on zero-free regions obtained in Section \ref{MulteSectoare} with some ideas used in most of the irreducibility criteria mentioned in the Introduction. As we shall see, such bounds $B'$ exist for polynomials with non-negative integer coefficients, or for polynomials $f(X)=a_0+a_1X+\cdots +a_nX^n\in \mathbb{Z}[X]$ with all partial sums $a_n+a_{n-1}+\cdots +a_{n-j}$ non-negative, where $0\leq j\leq n$, for instance. In the general case of polynomials $f$ with arbitrary signs, we will content ourselves with bounds that alter the $B'$ obtained for polynomials with non-negative coefficients by an explicit term depending on the negative coefficients of $f$. The results that we will prove will actually consider the more general case that $f(m)=p^kq$ with $p$ prime, $k \ge 1$ and $q$ a small integer (compared to $m$). Our first result is the following:

\begin{theorem}\label{CircleInsideSector1}
Let $f(X)$ be a non-constant polynomial with integer coefficients, which has no roots in some sector $S_{v,\theta}$ with $0<\theta\leq \frac{\pi}{2}$. If $f(m)=pq$ with $p$ a prime number, $q$ and m integers, $q>0$ and $m>v+\frac{q}{\sin\theta }$, then $f$ is irreducible over $\mathbb{Q}$. Moreover, if $f$ has no rational roots, the same conclusion holds if $m>v+\frac{\sqrt{q}}{\sin\theta }$.
\end{theorem}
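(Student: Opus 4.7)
The plan is a classical Cohn-style argument by contradiction, using the zero-free sector $S_{v,\theta}$ in place of the usual circular bound on the roots. Suppose $f = gh$ is a nontrivial factorization in $\mathbb{Z}[X]$ with $\deg g, \deg h \geq 1$ (by Gauss's lemma, reducibility of $f$ over $\mathbb{Q}$ furnishes such a factorization). I will show that two estimates for $|g(m)|$ are incompatible: an upper bound coming from the prime factorization $f(m) = pq$, and a lower bound coming from the location of the roots of $g$.

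For the upper bound: since $pq \neq 0$, both $g(m)$ and $h(m)$ are nonzero integers, and since $p$ is prime and $p \mid g(m)h(m) = pq$, $p$ divides at least one of the two factors; after relabeling, suppose $p \mid h(m)$. Then $g(m)\cdot(h(m)/p) = q$, so $|g(m)|$ divides $q$, giving $|g(m)| \leq q$.

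For the lower bound: every root of $g$ is a root of $f$ and hence lies in $\mathbb{C}\setminus S_{v,\theta}$. A short geometric computation — dropping a perpendicular from the real point $m > v$ onto each bounding ray $\{v + te^{\pm i\theta}:t\geq 0\}$ and using $\cos\theta \geq 0$ to check that the foot lies on the ray — shows that the distance from $m$ to $\mathbb{C}\setminus S_{v,\theta}$ equals $(m-v)\sin\theta$. The hypothesis $m > v + q/\sin\theta$ thus yields $|m-z| > q$ for every root $z$ of $g$. Factoring $g(X) = a_g\prod_{i=1}^{\deg g}(X-z_i)$ with $a_g \in \mathbb{Z}\setminus\{0\}$, I get
\[
|g(m)| \;=\; |a_g|\prod_{i=1}^{\deg g}|m-z_i| \;>\; q^{\deg g} \;\geq\; q,
\]
contradicting $|g(m)| \leq q$.

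For the sharpened statement, if $f$ has no rational roots then no factor of $f$ in $\mathbb{Z}[X]$ is linear, so $\deg g \geq 2$ in the hypothetical factorization. The weaker hypothesis $m > v + \sqrt{q}/\sin\theta$ gives $|m-z| > \sqrt{q}$ for each root of $g$, whence $|g(m)| > q^{\deg g/2} \geq q$, again contradicting $|g(m)| \leq q$. The only subtle point in the whole argument is the distance computation in the geometric step, which crucially uses $\theta \leq \pi/2$: if $\theta$ exceeded $\pi/2$, the foot of the perpendicular would fall beyond the vertex $v$ and the correct distance from $m$ to the complement of the sector would degenerate to $m-v$ rather than $(m-v)\sin\theta$.
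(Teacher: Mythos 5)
Your proof is correct and follows essentially the same route as the paper's: bound $|g(m)|\leq q$ from the prime factorization, and derive the contradictory lower bound $|g(m)|>q^{\deg g}\geq q$ from the fact that every root of $f$ is at distance greater than $(m-v)\sin\theta$ from $m$ (the paper phrases this as the closed disk of radius $q$ about $m$ lying inside $S_{v,\theta}$, which is the same geometric fact you verify by the perpendicular-foot computation). Your explicit remark on why $\theta\leq\pi/2$ is needed is a nice touch the paper leaves implicit.
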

\begin{proof}
Assume to the contrary that $f$ factors as $f(X)=g(X)h(X)$ with $g(X),h(X)\in \mathbb{Z}[X]$ and $\deg g\geq 1, \deg h\geq 1$. Then, since $g(m)h(m)=pq$ and $p$ is a prime number, one of $g(m)$ and $h(m)$ must be divisible by $p$, say $p\mid h(m)$. As a consequence, we must have
\begin{equation}\label{modulmic}
|g(m)|\leq q. 
\end{equation}
Now let us assume that $f(X)=a_0+a_1X+\cdots +a_nX^n$ with $a_n\neq 0$, and that $f$ decomposes as $f(X)=a_n(X-\xi _1)\cdots (X-\xi _n)$, for some complex numbers $\xi _1,\dots ,\xi _n$.
Since $m>v+\frac{q}{\sin\theta }$, we observe that the closed disk centered at $(m,0)$ of radius $q$ lies inside the (open) sector $S_{v,\theta }$. As none of the roots $\xi _1,\dots ,\xi _n$ lies inside this sector, this shows that the distance between $(m,0)$ and each of the roots $\xi _1,\dots ,\xi _n$ must exceed $q$:
\[
|m-\xi _i|>q,\quad i=1,\dots ,n.
\]
On the other hand, if we assume without loss of generality that the polynomial $g$ factors as $g(X)=b_t(X-\xi _1)\cdots (X-\xi _t)$ with $t\geq 1$ and $b_t$ a divisor of $a_n$, we must have
\[
|g(m)|=|b_t|\cdot |m-\xi _1|\cdots |m-\xi _t|\geq |m-\xi _1|\cdots |m-\xi _t|>q^t\geq q,
\]
which contradicts (\ref{modulmic}). Thus, $f$ must be irreducible over $\mathbb{Q}$. 

If we already know that $f$ has no rational roots, then the degree of each factor of $f$ is at least $2$, in particular $t=\deg g\geq 2$, and on the other hand in this case our assumption that $m>v+\frac{\sqrt{q}}{\sin\theta }$ shows that 
the closed disk centered at $(m,0)$ of radius $\sqrt{q}$ lies inside the sector $S_{v,\theta }$. Therefore,
\[
|m-\xi _i|>\sqrt{q},\quad i=1,\dots ,n.
\]
We then deduce that
\[
|g(m)|=|b_t|\cdot |m-\xi _1|\cdots |m-\xi _t|>q^{\frac{t}{2}}\geq q,
\]
which contradicts (\ref{modulmic}) and completes the proof.
\end{proof}
By combining Theorem \ref{CircleInsideSector1} with the results on zero-free sectors in Section \ref{MulteSectoare}, one may obtain various irreducibility criteria, of which we will only state here as corollaries the simplest ones that correspond to the case that $q=1$. The first such result provides a lower bound for the positive integer $m$ that depends only on the degree of $f$.
\begin{corollary}\label{coro1}
Let $f(X)$ be a polynomial with non-negative integer coefficients, of degree $n\geq 2$. If $f(m)$ is a prime number for some integer $m>\frac{1}{\sin\frac{\pi}{n}}$, then $f$ is irreducible over $\mathbb{Q}$. 
\end{corollary}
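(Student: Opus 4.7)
The plan is to derive this corollary as a direct combination of two earlier results, with essentially no further work beyond matching parameters. Specifically, I would apply Lemma \ref{lemaImaginar(f)} to locate a zero-free sector for $f$, and then invoke Theorem \ref{CircleInsideSector1} with $q=1$ to conclude irreducibility.

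First, since $f$ has non-negative integer coefficients and degree $n\geq 2$, Lemma \ref{lemaImaginar(f)} applies directly and tells us that $f$ has no zeros in the sector $S_{0,\pi/n}$. Thus in the notation of Theorem \ref{CircleInsideSector1} we may take $v=0$ and $\theta=\pi/n$. The hypothesis $0<\theta\leq\pi/2$ of that theorem is satisfied because $n\geq 2$ forces $\pi/n\leq \pi/2$ (with equality exactly when $n=2$).

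Next, since $f(m)$ is prime, we may write $f(m)=p\cdot q$ with $p=f(m)$ prime and $q=1$. The arithmetic condition of Theorem \ref{CircleInsideSector1} then reads
\[
m > v + \frac{q}{\sin\theta} = 0 + \frac{1}{\sin(\pi/n)},
\]
which is exactly our hypothesis on $m$. Therefore Theorem \ref{CircleInsideSector1} yields irreducibility of $f$ over $\mathbb{Q}$, and the corollary is proved.

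There is no real obstacle here, since the content of the corollary has essentially been absorbed into the earlier theorem and lemma. The only point worth double-checking is the edge case $n=2$, where $\theta=\pi/2$ lies on the boundary of the allowed range in Theorem \ref{CircleInsideSector1}; but the hypothesis of that theorem is $\theta\leq\pi/2$, so this case is included and no separate argument is needed.
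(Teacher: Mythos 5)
Your proposal is correct and follows exactly the paper's own proof: Lemma \ref{lemaImaginar(f)} gives the zero-free sector $S_{0,\pi/n}$, and Theorem \ref{CircleInsideSector1} is then applied with $q=1$, $v=0$, $\theta=\pi/n$. The extra check of the boundary case $\theta=\pi/2$ for $n=2$ is harmless but not needed, since the theorem explicitly allows $\theta\leq\pi/2$.
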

\begin{proof}
Since $f$ has non-negative coefficients, by Lemma \ref{lemaImaginar(f)}
it cannot have any zero in the sector $S_{0,\frac{\pi }{n}}$, and the conclusion follows by Theorem \ref{CircleInsideSector1} with $q=1$, $v=0$ and $\theta =\frac{\pi}{n}$.
\end{proof}
\begin{remark}\label{remarca1}
In practice, to avoid working with denominators like $\sin\frac{\pi}{n}$, one may for instance use the fact that
\[
\frac{n}{\pi}+1>\frac{1}{\sin \frac{\pi}{n}}\quad {\rm for}\ n\geq 2,
\]
or to search for small rational constants $c>\frac{1}{\pi}$, such that $cn>\frac{1}{\sin\frac{\pi}{n}}$ under some mild restrictions on $n$. For the latter case, one may use the fact that $\frac{n}{3}\geq \frac{1}{\sin\frac{\pi}{n}}$ for $n\geq 6$, for instance. Thus, one may restate Corollary \ref{coro1} in the following slightly weaker, but more effective form:
\end{remark}
\begin{corollary}\label{coro2}
Let $f(X)$ be a polynomial with non-negative integer coefficients, of degree $n\geq 2$. If $f(m)$ is a prime number for some integer $m>\frac{n}{\pi}+1$, then $f$ is irreducible over $\mathbb{Q}$. 
\end{corollary}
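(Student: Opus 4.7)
The plan is to deduce Corollary~\ref{coro2} directly from Corollary~\ref{coro1} by verifying the elementary inequality asserted in Remark~\ref{remarca1}, namely
\[
\frac{n}{\pi}+1 \;>\; \frac{1}{\sin\frac{\pi}{n}} \qquad \text{for every integer } n \geq 2.
\]
Once this inequality is in hand, any integer $m$ with $m > n/\pi + 1$ automatically satisfies $m > 1/\sin(\pi/n)$, and Corollary~\ref{coro1} applies to give the irreducibility of $f$ over $\mathbb{Q}$.

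To establish the inequality, I would set $x = \pi/n \in (0, \pi/2]$ and rewrite it equivalently as $\sin x \cdot (1+x) > x$, or equivalently $\frac{\sin x}{x} > \frac{1}{1+x}$. For $n = 2$ (that is, $x = \pi/2$), the left-hand side equals $1 + \pi/2$ while the right-hand side equals $\pi/2$, so the inequality is immediate. For $n \geq 3$, i.e., $x \leq \pi/3$, I would use the standard Taylor lower bound $\sin x > x - \frac{x^3}{6}$, which reduces the target to the polynomial inequality
\[
x^2\!\left(1 - \tfrac{x}{6} - \tfrac{x^2}{6}\right) > 0.
\]
On the interval $[0, \pi/3]$ one has $x/6 \leq \pi/18$ and $x^2/6 \leq \pi^2/54$, and a quick numerical check shows $\pi/18 + \pi^2/54 < 1$, so the parenthesized factor is strictly positive and the inequality follows.

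The main (and essentially only) obstacle is verifying this trigonometric inequality cleanly; all the irreducibility content has already been carried by Corollary~\ref{coro1}, which itself rests on Theorem~\ref{CircleInsideSector1} and Lemma~\ref{lemaImaginar(f)}.
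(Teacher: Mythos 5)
Your proposal is correct and follows the same route as the paper: Corollary~\ref{coro2} is obtained from Corollary~\ref{coro1} via the inequality $\frac{n}{\pi}+1>\frac{1}{\sin\frac{\pi}{n}}$ stated in Remark~\ref{remarca1}, which the paper asserts without proof and which you verify correctly (your reduction to $x^2(1-\tfrac{x}{6}-\tfrac{x^2}{6})>0$ on $(0,\pi/3]$ and the separate check at $n=2$ are both sound).
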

By combining Lemma \ref{lemapozitivagenerala} with Corollary \ref{coro1}, one easily obtains irreducibility conditions for polynomials with integer coefficients whose partial sums $s_{f,j,1}$ are all non-negative:
\begin{corollary}\label{coro3}
Let $f(X)=a_0+a_1X+\cdots +a_nX^n\in \mathbb{Z}[X]$ be a polynomial of degree $n\geq 2$, with all partial sums $a_n+a_{n-1}+\cdots +a_{n-j}$ non-negative, where $0\leq j\leq n$. If $f(m)$ is a prime number for some integer $m>1+\frac{1}{\sin \frac{\pi}{n}}$, then $f$ is irreducible over $\mathbb{Q}$. 
\end{corollary}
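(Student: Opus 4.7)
The plan is to assemble this corollary as a direct synthesis of two previously established results: Lemma \ref{lemapozitivagenerala} (specialized to $\alpha = 1$) and Theorem \ref{CircleInsideSector1} (specialized to $q = 1$, since $f(m)$ is prime). The hypothesis on partial sums is designed precisely to fit the first, and the lower bound on $m$ is designed precisely to fit the second.

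First I would check that the assumption is exactly the $s_{f,j,1} \geq 0$ condition in Lemma \ref{lemapozitivagenerala}: indeed, the partial sum $a_n + a_{n-1} + \cdots + a_{n-j}$ equals $s_{f,j,\alpha}$ evaluated at $\alpha = 1$. Lemma \ref{lemapozitivagenerala} then gives that $f(X+1)$ has non-negative coefficients, and hence (equivalently, by the discussion after Lemma \ref{lemaImaginar(f)}) $f$ has no zeros in the sector $S_{1, \pi/n}$.

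Next I would apply Theorem \ref{CircleInsideSector1} with $v = 1$, $\theta = \pi/n$, and $q = 1$. The constraint $0 < \theta \leq \pi/2$ is satisfied because $n \geq 2$, and the hypothesis $m > v + q/\sin\theta$ becomes exactly $m > 1 + 1/\sin(\pi/n)$, as assumed. Since $f(m)$ is prime, Theorem \ref{CircleInsideSector1} yields irreducibility of $f$ over $\mathbb{Q}$.

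There is no serious obstacle here: the corollary is essentially a plug-and-play combination of two results already proved in the paper. The only small verifications are the identification of the partial-sum hypothesis with the $s_{f,j,1} \geq 0$ condition, and the range check $\pi/n \leq \pi/2$ for $n \geq 2$, both of which are immediate.
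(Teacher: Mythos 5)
Your proof is correct and follows essentially the same route as the paper: both rest on Lemma \ref{lemapozitivagenerala} with $\alpha=1$ combined with Theorem \ref{CircleInsideSector1} for $q=1$. The only cosmetic difference is that the paper substitutes $X\mapsto X+1$ and invokes Corollary \ref{coro1} for $g(X)=f(X+1)$ at the shifted argument $m'=m-1$, whereas you apply Theorem \ref{CircleInsideSector1} directly to $f$ with the sector $S_{1,\pi/n}$; the two are equivalent.
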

\begin{proof} Let $g(X):=f(X+1)$. By Lemma \ref{lemapozitivagenerala} with $\alpha=1$, the polynomial $g(X)$ has non-negative coefficients, so if we prove that $g$ is irreducible, $f$ must be irreducible as well. We know that $f(m)$ is a prime number for some integer $m>1+\frac{1}{\sin \frac{\pi}{n}}$, so denoting $m-1$ by $m'$, we see that $g(m')$ is a prime number and $m'>\frac{1}{\sin \frac{\pi}{n}}$. By Corollary \ref{coro1}, $g$ must be irreducible, so $f$ too must be irreducible. This completes the proof.
\end{proof}
An immediate consequence of Theorem \ref{SectorGenerala1} and Theorem \ref{CircleInsideSector1} is given in the following result.
\begin{corollary} \label{coroFujiwaraCipi}
Let $f(X)=a_0+a_1X+\cdots +a_nX^n\in \mathbb{Z}[X]$ with $a_0\neq 0$, and denote by $j_1 < j_2 < \cdots < j_\ell$ all the indices $j$ for which $a_j < 0$. If $a_n>|a_{j_1}+\cdots + a_{j_\ell}|$, and $f(m)$ is a prime number for some integer $m > 1+\frac{1}{\sin{\frac{\pi}{n}}}$,
then $f$ is irreducible over $\mathbb{Q}$.
\end{corollary}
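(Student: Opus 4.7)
The plan is to combine the zero-free sector provided by Theorem \ref{SectorGenerala1} with the irreducibility criterion of Theorem \ref{CircleInsideSector1}, using $q=1$ and $\theta=\pi/n$. Under the hypothesis $a_n>|a_{j_1}+\cdots+a_{j_\ell}|$, Theorem \ref{SectorGenerala1} tells us that $f$ has no zeros in the sector $S_{v,\pi/n}$ whose vertex is
\[
v=\max_{1\leq i\leq\ell}\left(\frac{|a_{j_1}+\cdots+a_{j_\ell}|}{a_n}\right)^{\frac{1}{n-j_i}}.
\]
(If $\ell=0$, the polynomial has only non-negative coefficients, and Lemma \ref{lemaImaginar(f)} gives the sector $S_{0,\pi/n}$, corresponding to $v=0$.)

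The crucial step is the simple observation that the hypothesis $a_n>|a_{j_1}+\cdots+a_{j_\ell}|$ forces the base $\frac{|a_{j_1}+\cdots+a_{j_\ell}|}{a_n}$ to lie in $[0,1)$. Since each exponent $\frac{1}{n-j_i}$ is a positive real number, each factor in the maximum is strictly less than $1$, and therefore $v<1$. This is the only computational content of the argument, and it is where the hypothesis enters in an essential way.

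Finally, I would invoke Theorem \ref{CircleInsideSector1} with $\theta=\pi/n$ and $q=1$: any integer $m$ with $m>v+\frac{1}{\sin(\pi/n)}$ suffices to conclude that $f$ is irreducible whenever $f(m)$ is prime. Since $v<1$, the assumed inequality $m>1+\frac{1}{\sin(\pi/n)}$ already implies this stronger bound, so the hypothesis of Theorem \ref{CircleInsideSector1} is satisfied and irreducibility of $f$ over $\mathbb{Q}$ follows at once.

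There is no real obstacle: the proof is essentially a one-line corollary. The only point requiring a small amount of care is to verify the strict inequality $v<1$ in all cases (including $\ell=0$, which should be handled via Lemma \ref{lemaImaginar(f)}), so that the looser bound $1+\frac{1}{\sin(\pi/n)}$ can legitimately replace the sharper bound $v+\frac{1}{\sin(\pi/n)}$ when invoking Theorem \ref{CircleInsideSector1}.
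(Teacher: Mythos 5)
Your proposal is correct and follows essentially the same route as the paper: invoke Theorem \ref{SectorGenerala1} to get a zero-free sector with vertex $v<1$ (the hypothesis $a_n>|a_{j_1}+\cdots+a_{j_\ell}|$ making the base of each power less than $1$), then apply Theorem \ref{CircleInsideSector1} with $q=1$ and $\theta=\pi/n$. The paper simply packages the observation $v<1$ as ``$f$ has no zeros in $S_{1,\pi/n}$'' and applies the theorem with $v=1$, which is the same argument.
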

\begin{proof}
By Theorem \ref{SectorGenerala1}, $f$ will have no zeros in the sector $S_{1,\frac{\pi}{n}}$, and one applies then Theorem \ref{CircleInsideSector1} with $q=v=1$ and $\theta =\frac{\pi}{n}$.
\end{proof}
\begin{remark}\label{largemodulus}
We note here that if $a_0>\delta |a_1|+\cdots +\delta ^n|a_n|$ for a positive real $\delta $, then all the roots of $f$ must have absolute values exceeding $\delta $, so the larger $a_0$ is, the larger the maximum of the absolute values of the roots of $f$ is. On the other hand, in such case $a_0$ will not appear in $|a_{j_1}+ \cdots + a_{j_\ell}|$, so a lower bound like $1+\frac{1}{\sin{\frac{\pi}{n}}}$ in Corollary \ref{coroFujiwaraCipi} will hold for polynomials of degree $n$ with $a_n>|a_{j_1}+\cdots + a_{j_\ell}|$ and arbitrary large $a_0$, and hence with arbitrarily large absolute values of their roots. The lower bound $1+\frac{1}{\sin{\frac{\pi}{n}}}$ will therefore be much more efficient than the usual naive bound $1+\max\limits _{f(\xi )=0}|\xi |$.
\end{remark}

For polynomials with a single sign variation we will prove the following result.
\begin{corollary}\label{osinguravariatie}
Let $f(X)=a_0+a_1X+\cdots +a_nX^n\in \mathbb{Z}[X]$ be a polynomial of degree $n\geq 2$, with $a_n>0$, and having a single sign variation between consecutive coefficients. If $f(1)>0$ and $f(m)$ is a prime number for some integer $m>1+\frac{1}{\sin \frac{\pi}{n}}$, then $f$ is irreducible over $\mathbb{Q}$. 
\end{corollary}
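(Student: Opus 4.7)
The plan is to reduce the statement to an application of Theorem~\ref{CircleInsideSector1} via Corollary~\ref{CorolarVariatii}, the only real work being to translate the hypothesis $f(1)>0$ into the inequality between the sums of positive and negative coefficients demanded by Corollary~\ref{CorolarVariatii}.

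First I would analyse the sign pattern. Since $f$ has exactly one sign variation between consecutive coefficients and $a_n>0$, the partition (\ref{FiguraCuSemne}) degenerates to $s=1$ and hence $k=\lfloor s/2\rfloor+1=1$: there is a single positive block $S_1^+$ containing $a_n$ and a single negative block $S_1^-$ sitting below it, with every other coefficient equal to zero. Evaluating at $X=1$ therefore gives
\[
f(1)=\sum_{i=0}^{n}a_i \;=\; S_1^+ - S_1^-,
\]
so the hypothesis $f(1)>0$ is exactly the inequality $S_1^+>S_1^-$.

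Next I would invoke Corollary~\ref{CorolarVariatii} with $s=1$. Its single required hypothesis (namely $S_j^+>S_j^-$ for $j=1,\dots,\lfloor s/2\rfloor+1$) reduces in this case to $S_1^+>S_1^-$, which I have just verified. The corollary then yields that $f$ has no zeros in the sector $S_{1,\pi/n}$.

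Finally, since $n\geq 2$ one has $\theta:=\pi/n\leq \pi/2$, so Theorem~\ref{CircleInsideSector1} applies with $v=1$, $q=1$ and this $\theta$. The threshold it imposes, $v+q/\sin\theta = 1+1/\sin(\pi/n)$, is exactly the bound on $m$ in the hypothesis, and $f(m)$ is prime by assumption. Theorem~\ref{CircleInsideSector1} therefore gives the irreducibility of $f$ over $\mathbb{Q}$. The only substantive step is the identification $f(1)=S_1^+-S_1^-$; everything else is a straightforward invocation of previously proved results, so I do not expect a serious obstacle.
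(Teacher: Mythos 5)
Your proof is correct and essentially matches the paper's: the paper routes the same key observation (that $f(1)>0$ forces the sum of the positive coefficients to exceed the absolute value of the sum of the negative ones) through Corollary \ref{CoroInegalitatea1} rather than Corollary \ref{CorolarVariatii}, but for a polynomial with a single sign variation these two corollaries give the identical zero-free sector $S_{1,\frac{\pi}{n}}$. After that, your application of Theorem \ref{CircleInsideSector1} with $v=q=1$ and $\theta=\frac{\pi}{n}$ is exactly the paper's concluding step.
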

\begin{proof}
Since $a_n>0$ and $f(1)>0$, condition (\ref{InegalitateaDintreSume}) in Corollary \ref{CoroInegalitatea1} is obviously satisfied, so $f$ will have no zeros in the sector $S_{1,\frac{\pi}{n}}$. Therefore, if $f(m)$ is a prime number for some integer $m>1+\frac{1}{\sin \frac{\pi}{n}}$, by Theorem \ref{CircleInsideSector1} with $v=q=1$ and $\theta =\frac{\pi}{n}$, $f$ must be irreducible over $\mathbb{Q}$. 
\end{proof}
We will consider now the case that $f(m)=p^kq$ with $p$ prime, $k$ some positive integer and $q$ some positive integer with $p\nmid q$. To test the irreducibility of $f$ in this case, it is often useful to also use some information on the derivative of $f$. In this respect, we will prove the following result.
\begin{theorem}\label{CircleInsideSector2}
Let $f(X)$ be a non-constant polynomial with integer coefficients, which has no roots in some sector $S_{v,\theta}$ with $0<\theta\leq \frac{\pi}{2}$. Assume that $f(m)=p^kq$ and $f'(m)=\pm p^{\ell}r$ with $p$ a prime number, $k$, $\ell$, $q$, $r$ and $m$ positive integers, $p\nmid qr$, and let $s:=\min (\ell, \frac{k}{2})$. If $m>v+\frac{p^{s}q}{\sin\theta }$, then $f$ is irreducible over $\mathbb{Q}$. Moreover, if $f$ has no rational roots, the same conclusion holds if $m>v+\frac{\sqrt{p^{s}q}}{\sin\theta }$.
\end{theorem}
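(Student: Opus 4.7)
The strategy is to mirror the proof of Theorem~\ref{CircleInsideSector1}, inserting one new ingredient: a derivative-based bookkeeping step that controls how the $p$-adic valuation of $f(m)$ can be split between two hypothetical integer factors of $f$. Suppose for contradiction that $f=gh$ with $g,h\in\mathbb{Z}[X]$ of positive degree. Write $g(m)=\pm p^{a}q_{1}$ and $h(m)=\pm p^{b}q_{2}$ with $p\nmid q_{1}q_{2}$; since $g(m)h(m)=p^{k}q$ and $p\nmid q$, one has $a+b=k$ and $q_{1}q_{2}=\pm q$, whence $|q_{1}|\leq q$. After possibly swapping $g$ and $h$, I may assume $a\leq b$, so in particular $a\leq k/2$.

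The key new step uses the derivative. From $f'=g'h+gh'$ combined with $p^{b}\mid h(m)$, $p^{a}\mid g(m)$ and $a\leq b$, both summands of $f'(m)=g'(m)h(m)+g(m)h'(m)$ are divisible by $p^{a}$, so $p^{a}\mid f'(m)=\pm p^{\ell}r$. Since $p\nmid r$, this forces $a\leq\ell$, and combined with $a\leq k/2$ gives $a\leq s$. Therefore
\[
|g(m)|=p^{a}|q_{1}|\leq p^{s}q,
\]
which plays exactly the role of inequality~(\ref{modulmic}) in the proof of Theorem~\ref{CircleInsideSector1}, with the previous bound $q$ replaced by $p^{s}q$.

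From here the geometric portion is identical to the earlier proof. The hypothesis $m>v+p^{s}q/\sin\theta$ places the closed disk of radius $p^{s}q$ centered at $(m,0)$ inside the open sector $S_{v,\theta}$, so each root $\xi_{i}$ of $f$ satisfies $|m-\xi_{i}|>p^{s}q$. Writing $g(X)=b_{t}(X-\xi_{1})\cdots(X-\xi_{t})$ with $t=\deg g\geq 1$ and $b_{t}\in\mathbb{Z}$, I obtain $|g(m)|\geq|m-\xi_{1}|\cdots|m-\xi_{t}|>(p^{s}q)^{t}\geq p^{s}q$, contradicting the bound above. If moreover $f$ has no rational roots, then $t\geq 2$, and the weaker hypothesis $m>v+\sqrt{p^{s}q}/\sin\theta$ instead places the closed disk of radius $\sqrt{p^{s}q}$ in $S_{v,\theta}$, yielding $|m-\xi_{i}|>\sqrt{p^{s}q}$ and hence $|g(m)|>(\sqrt{p^{s}q})^{t}\geq p^{s}q$, the same contradiction. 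The only genuinely new ingredient relative to Theorem~\ref{CircleInsideSector1} is the $p$-adic derivative argument extracting $a\leq\ell$; once $|g(m)|\leq p^{s}q$ is in hand, the disk-in-sector comparison carries over verbatim.
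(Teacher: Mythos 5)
Your proposal is correct and follows essentially the same route as the paper's own proof: the same $p$-adic bookkeeping yielding $|g(m)|\leq p^{s}q$ (you merely make explicit the divisibility argument behind $\min(a,b)\leq\ell$, which the paper states without elaboration), followed by the identical disk-in-sector estimate. No gaps.
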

\begin{proof}
Assume again, to the contrary, that $f$ may be written as $f(X)=g(X)h(X)$ with $g,h\in \mathbb{Z}[X]$ and $\deg g\geq 1, \deg h\geq 1$. Then, as 
$f'(X)=g'(X)h(X)+g(X)h'(X)$, we must simultaneously have $g(m)h(m)=p^kq$ and $g'(m)h(m)+g(m)h'(m)=\pm p^{\ell}r$. Let us now write $|g(m)|=p^a\alpha $ and $|h(m)|=p^b\beta $ with $a,b,\alpha ,\beta $ non-negative integers and $\alpha,  \beta $ not divisible by $p$. Then $\alpha \cdot \beta =q$, $a+b=k$, and without loss of generality we may assume that $a\leq b$, so $a\leq \frac{k}{2}$. On the other hand, we must also have $\textrm{min}(a,b)\leq \ell$, hence $a\leq \textrm{min}(\frac{k}{2},\ell)=s$. In particular, we conclude that 
\begin{equation}\label{majorant}
|g(m)|\leq p^{s}q.
\end{equation}
Assume as in the proof of Theorem \ref{CircleInsideSector1} that $f$ decomposes as $f(X)=a_n(X-\xi _1)\cdots (X-\xi _n)$, for some complex numbers $\xi _1,\dots ,\xi _n$. If $m>v+\frac{p^{s}q}{\sin\theta }$, the closed disk with center $(m,0)$ and radius $p^{s}q$ lies inside the sector $S_{v,\theta }$. Recalling that none of $\xi _1,\dots ,\xi _n$ lies in this sector, we see that the distance between $(m,0)$ and each of the roots $\xi _1,\dots ,\xi _n$ must exceed $p^{s}q$:
\[
|m-\xi _i|>p^{s}q,\quad \text{for}\ i=1,\dots ,n.
\]
On the other hand, if we assume again without loss of generality that the polynomial $g$ factors as $g(X)=b_t(X-\xi _1)\cdots (X-\xi _t)$ with $t\geq 1$ and $b_t$ a divisor of $a_n$, then we must have
\[
|g(m)|=|b_t|\cdot |m-\xi _1|\cdots |m-\xi _t|\geq |m-\xi _1|\cdots |m-\xi _t|>(p^{s}q)^t\geq p^{s}q,
\]
which contradicts (\ref{majorant}), so $f$ must be irreducible over $\mathbb{Q}$. 

Under the assumption that $f$ has no rational roots, the degree of each factor of $f$ must be at least $2$, so in particular $t = \deg g \geq 2$. On the other hand, in this second case our assumption that $m>v+\frac{\sqrt{p^{s}q}}{\sin\theta }$ shows that 
the closed disk $|z-m|\leq \sqrt{p^{s}q}$ lies inside the sector $S_{v,\theta}$. Thus,
\[
|m-\xi _i|>\sqrt{p^{s}q},\quad \text{for}\ i=1,\dots ,n.
\]
We then deduce that
\[
|g(m)|=|b_t|\cdot |m-\xi _1|\cdots |m-\xi _t|>(p^{s}q)^{\frac{t}{2}}\geq p^{s}q,
\]
which contradicts (\ref{majorant}) and completes the proof.
\end{proof}
In particular, for $q=1$ and $\ell=0$ one obtains the following result.
\begin{corollary}\label{coro4}
Let $f(X)$ be a polynomial with non-negative integer coefficients, of degree $n\geq 2$.  If for some integer $m>\frac{1}{\sin \frac{\pi}{n}}$ we have $f(m)=p^k$ with $p$ a prime number, $k$ a positive integer and $p\nmid f'(m)$, then $f$ is irreducible over $\mathbb{Q}$. 
\end{corollary}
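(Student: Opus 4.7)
The plan is to recognize that Corollary \ref{coro4} is essentially a direct specialization of Theorem \ref{CircleInsideSector2}, combined with the zero-free sector guaranteed by Lemma \ref{lemaImaginar(f)}. There is no new obstacle to overcome; the work has already been done in the preceding two results, and the corollary amounts to a careful bookkeeping of parameters.

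First, I would invoke Lemma \ref{lemaImaginar(f)}: since $f$ has non-negative coefficients and degree $n\geq 1$, it has no zeros in the sector $S_{0,\pi/n}$. This fixes the zero-free sector needed to apply the irreducibility machinery, with vertex $v=0$ and half-angle $\theta=\pi/n$. Note that $\theta=\pi/n\leq \pi/2$ since $n\geq 2$, so the hypothesis on $\theta$ in Theorem \ref{CircleInsideSector2} is satisfied.

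Next, I would apply Theorem \ref{CircleInsideSector2} with the parameters $q=1$, $\ell=0$, and $r=|f'(m)|$. The hypothesis $p\nmid f'(m)$ means exactly that $f'(m)=\pm p^{0}r$ with $p\nmid r$, so the decomposition of $f'(m)$ required in Theorem \ref{CircleInsideSector2} is valid with $\ell=0$. Consequently the exponent $s=\min(\ell,k/2)=\min(0,k/2)=0$, so $p^{s}q=1$. The inequality $m>v+\frac{p^{s}q}{\sin\theta}$ of Theorem \ref{CircleInsideSector2} therefore reduces to $m>\frac{1}{\sin\frac{\pi}{n}}$, which is precisely the hypothesis of the corollary.

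Thus Theorem \ref{CircleInsideSector2} applies verbatim and yields the irreducibility of $f$ over $\mathbb{Q}$. The main (trivial) point to verify is that our $s=0$ is genuinely the minimum, which is immediate because $k\geq 1$ implies $k/2\geq 1/2>0=\ell$. No further estimates are needed.
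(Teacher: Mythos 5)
Your proof is correct and matches the paper's own argument exactly: the paper likewise invokes Lemma \ref{lemaImaginar(f)} to get the zero-free sector $S_{0,\pi/n}$ and then applies Theorem \ref{CircleInsideSector2} with $q=1$, $\ell=v=0$ and $\theta=\pi/n$. Your additional bookkeeping (that $s=0$ and hence $p^{s}q=1$, and that $p\nmid f'(m)$ supplies the required factorization $f'(m)=\pm p^{0}r$) is exactly the specialization the paper leaves implicit.
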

\begin{proof}
Since $f$ has non-negative coefficients, by Lemma \ref{lemaImaginar(f)}
it has no roots in the sector $S_{0,\frac{\pi }{n}}$. The conclusion follows then by Theorem \ref{CircleInsideSector2} with $q=1$, $\ell=v=0$ and $\theta =\frac{\pi}{n}$.
\end{proof}
By combining now Lemma \ref{lemapozitivagenerala} with Corollary \ref{coro4}, we obtain other irreducibility conditions for polynomials with integer coefficients whose partial sums $s_{f,j,1}$ are all non-negative:

\begin{corollary}\label{coro5}
Let $f(X)=a_0+a_1X+\cdots +a_nX^n\in \mathbb{Z}[X]$ be a polynomial of degree $n\geq 2$, with all partial sums $a_n+a_{n-1}+\cdots +a_{n-j}$ non-negative, where $0\leq j\leq n$. If for some integer $m>1+\frac{1}{\sin \frac{\pi}{n}}$ we have $f(m)=p^k$ with $p$ a prime number, $k$ a positive integer and $p\nmid f'(m)$, then $f$ is irreducible over $\mathbb{Q}$. 
\end{corollary}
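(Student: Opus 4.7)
The plan is to reduce this to Corollary \ref{coro4} by the standard translation trick already used in Corollary \ref{coro3}. Concretely, I would set $g(X) := f(X+1)$ and verify that $g$ satisfies all the hypotheses of Corollary \ref{coro4}, so that its irreducibility follows, and hence so does that of $f$.

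First, I would invoke Lemma \ref{lemapozitivagenerala} with $\alpha = 1$: the hypothesis that every partial sum $a_n + a_{n-1} + \cdots + a_{n-j}$ (for $0 \leq j \leq n$) is non-negative is exactly the condition $s_{f,j,1} \geq 0$ required there, so $g(X) = f(X+1)$ has non-negative coefficients. Clearly $\deg g = n \geq 2$ and $g \in \mathbb{Z}[X]$.

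Next I would transfer the numerical hypotheses from $m$ to $m' := m - 1$. Since $m > 1 + \frac{1}{\sin(\pi/n)}$, we get $m' > \frac{1}{\sin(\pi/n)}$, and $m'$ is a positive integer. Moreover $g(m') = f(m) = p^k$, and since $g'(X) = f'(X+1)$ we have $g'(m') = f'(m)$, so $p \nmid g'(m')$. Thus $g$ satisfies all hypotheses of Corollary \ref{coro4} (non-negative integer coefficients, $n \geq 2$, $g(m') = p^k$ with $p$ prime, $k \geq 1$, $p \nmid g'(m')$, and $m' > 1/\sin(\pi/n)$).

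Applying Corollary \ref{coro4} to $g$ yields that $g$ is irreducible over $\mathbb{Q}$. Since irreducibility is preserved by the invertible substitution $X \mapsto X+1$ (equivalently, any factorization $f = f_1 f_2$ over $\mathbb{Q}$ would give $g = f_1(X+1) f_2(X+1)$ over $\mathbb{Q}$), it follows that $f$ is irreducible over $\mathbb{Q}$, completing the proof. There is essentially no obstacle here: all the work has been done in Lemma \ref{lemapozitivagenerala} (to get non-negative coefficients after translation) and in Corollary \ref{coro4} (which already packages Theorem \ref{CircleInsideSector2} with the zero-free sector $S_{0,\pi/n}$); this result is merely their composition, mirroring the way Corollary \ref{coro3} is deduced from Corollary \ref{coro1}.
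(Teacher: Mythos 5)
Your proof is correct and follows exactly the same route as the paper: translate by $1$ via Lemma \ref{lemapozitivagenerala} to obtain $g(X)=f(X+1)$ with non-negative coefficients, pass to $m'=m-1$, observe $g'(m')=f'(m)$, and apply Corollary \ref{coro4}. No differences worth noting.
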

\begin{proof}  As before, if $g(X):=f(X+1)$, then by Lemma \ref{lemapozitivagenerala} with $\alpha=1$, $g$ must have non-negative coefficients, so if we prove that $g$ is irreducible over $\mathbb{Q}$, the same holds for $f$. Since $f(m)$ is a prime power $p^k$ for some integer $m>1+\frac{1}{\sin \frac{\pi}{n}}$, by letting $m'=m-1$, we see that $g(m')=p^k$ and $m'>\frac{1}{\sin \frac{\pi}{n}}$. On the other hand, $g'(m')=f'(m'+1)=f'(m)$, which is not divisible by $p$. Therefore, by Corollary \ref{coro4}, $g$ must be irreducible over $\mathbb{Q}$. The same must therefore hold for $f$, and the proof is complete.
\end{proof}

We will end this section with some irreducibility criteria that rely on Lemma \ref{reciprocul} and allow one to look at potentially smaller values of $m$ for which $f(m)$ might be a prime number, in the case that $f$ has no roots in a sector $S_{v,\frac{\pi}{n}}$ with large $v$, while its reciprocal has no roots in a sector $S_{\tilde{v},\frac{\pi}{n}}$ with sufficiently small $\tilde{v}$.
\begin{theorem}\label{DiskInLens}
Let $f(X)$ be a polynomial with integer coefficients of degree $n\geq 3$, with $f(0)\neq 0$, and assume that its reciprocal $\tilde{f}(X)$ has no roots in the sector $S_{\tilde{v},\frac{\pi}{n}}$ for some real number $\tilde{v}\in \left( 0,\frac{1}{2}\tan \frac{\pi}{2n}\right )$. If $f(m)$ is a prime number for some integer $m\in \left( \frac{1}{2\tilde{v}}-\delta, \frac{1}{2\tilde{v}}+\delta\right) $ with
\begin{equation}\label{delta}
\delta=\sqrt{1+\frac{1}{4\tilde{v}^2}-\frac{1}{\tilde{v}\sin \frac{\pi}{n}}},
\end{equation}
then $f$ is irreducible over $\mathbb{Q}$.
\end{theorem}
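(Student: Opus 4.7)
The plan is to follow the template of Theorem \ref{CircleInsideSector1}, but with the zero-free region being the lens-shaped set $L_{\tilde{v},\pi/n}$ in place of a sector. Suppose, for contradiction, that $f=gh$ in $\mathbb{Z}[X]$ with $\deg g,\deg h\geq 1$. Since $f(m)$ is prime, exactly one of $|g(m)|,|h(m)|$ equals $1$; relabelling if necessary, assume $|g(m)|\leq 1$. Writing $f(X)=a_{n}\prod_{i=1}^{n}(X-\xi_{i})$ and $g(X)=b_{t}\prod_{i=1}^{t}(X-\xi_{i})$ with $b_{t}\mid a_{n}$ and $t\geq 1$, one has $|g(m)|\geq\prod_{i=1}^{t}|m-\xi_{i}|$, so a contradiction will follow as soon as we establish that $|m-\xi_{i}|>1$ for every root $\xi_{i}$ of $f$.

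Since $\tilde{f}$ has no zeros in $S_{\tilde{v},\pi/n}$, Lemma \ref{reciprocul} gives that $f$ has no zeros in the open lens $L_{\tilde{v},\pi/n}$, which is the intersection of the two open disks of radius $R=\tfrac{1}{2\tilde{v}\sin(\pi/n)}$ centered at the points $c_{\pm}=\tfrac{1}{2\tilde{v}}\pm\tfrac{i}{2\tilde{v}}\cot\tfrac{\pi}{n}$. Thus it suffices to show that the closed disk $D_{m}=\{z:|z-m|\leq 1\}$ is contained in $L_{\tilde{v},\pi/n}$; the bound $|m-\xi_{i}|>1$ is then immediate, and the chain $1\geq|g(m)|\geq\prod|m-\xi_{i}|>1$ yields the desired contradiction.

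By the symmetry of both $D_{m}$ and $L_{\tilde{v},\pi/n}$ about the real axis, the containment $D_{m}\subset L_{\tilde{v},\pi/n}$ reduces to the containment $D_{m}\subset\{|z-c_{+}|<R\}$, which is in turn equivalent to $|m-c_{+}|+1\leq R$. Squaring and invoking the identity $\tfrac{1}{\sin^{2}(\pi/n)}-\cot^{2}(\pi/n)=1$ collapses this to
\[
\Bigl(m-\tfrac{1}{2\tilde{v}}\Bigr)^{2}\leq 1+\frac{1}{4\tilde{v}^{2}}-\frac{1}{\tilde{v}\sin(\pi/n)}=\delta^{2},
\]
which is exactly the hypothesis placed on $m$. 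The main technical point is verifying that the right-hand side is non-negative, i.e.\ that $\delta$ is real. Regarded as a quadratic in $\tilde{v}$, the expression $4\tilde{v}^{2}\sin(\pi/n)-4\tilde{v}+\sin(\pi/n)$ has roots $\tfrac{1\mp\cos(\pi/n)}{2\sin(\pi/n)}$; the half-angle identities $1-\cos(\pi/n)=2\sin^{2}(\pi/2n)$ and $\sin(\pi/n)=2\sin(\pi/2n)\cos(\pi/2n)$ simplify the smaller root to $\tfrac{1}{2}\tan(\pi/2n)$. The hypothesis $\tilde{v}\in\bigl(0,\tfrac{1}{2}\tan(\pi/2n)\bigr)$ therefore puts $\tilde{v}$ to the left of that smaller root, where the quadratic is non-negative, making $\delta^{2}\geq 0$ and also making $R>1$ (needed so that the inequality $|m-c_{+}|+1\leq R$ can even be satisfied).

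The only real obstacle is the geometric verification in the third paragraph---showing cleanly that a closed disk of radius $1$ about $(m,0)$ fits inside the lens---and checking that the stated range for $\tilde{v}$ is precisely what the underlying quadratic inequality demands. Everything else is a direct adaptation of the factorisation argument from Theorem \ref{CircleInsideSector1}.
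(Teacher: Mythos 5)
Your proposal is correct and follows essentially the same route as the paper: Lemma \ref{reciprocul} supplies the zero-free lens, the hypothesis $m\in\left(\frac{1}{2\tilde v}-\delta,\frac{1}{2\tilde v}+\delta\right)$ is shown to be exactly the condition for the closed unit disk about $(m,0)$ to lie inside the lens (the paper phrases this as the distance from $(m,0)$ to the two bounding arcs exceeding $1$, which is the same computation as your $|m-c_+|+1<R$), and the factorization argument of Theorem \ref{CircleInsideSector1} finishes. The only nitpick is that containment of the closed disk in the open disk requires the strict inequality $|m-c_+|+1<R$ rather than $\leq$, but the open interval condition on $m$ delivers the strict version anyway.
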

\begin{proof} 
First of all, one may check that $\delta $ is well defined, as $\tilde{v}\in \left( 0,\frac{1}{2}\tan \frac{\pi}{2n}\right )$. By Lemma \ref{reciprocul} we see that $f$ has no zeros in the lens-shaped region $L_{\tilde{v},\frac{\pi}{n}}$ given by (\ref{lentila}). We will search for conditions that allow a closed disk of radius $1$ centered at a point $(m,0)$ with $0<m<\frac{1}{\tilde{v}}$ to belong to the open region $L_{\tilde{v},\frac{\pi}{n}}$.  In this respect, the distance $d$ from the point $(m,0)$ to each of the two circular arcs defining the frontier of $L_{\tilde{v},\frac{\pi}{n}}$ is easily seen to be 
\[
d=\frac{1}{2\tilde{v}\sin \frac{\pi}{n}}-\sqrt{\left( m-\frac{1}{2\tilde{v}}\right )^2+\frac{1}{4\tilde{v}^2\tan ^2\frac{\pi}{n}}}.
\]
For $m=\frac{1}{2\tilde{v}}$ we have $d=
\frac{1}{2\tilde{v}}\tan \frac{\pi}{2n}$, which is larger than $1$, as $\tilde{v}\in \left( 0,\frac{1}{2}\tan \frac{\pi}{2n}\right )$. If $m\neq\frac{1}{2\tilde{v}}$, for $d$ to exceed $1$, we have to ask $m$ to satisfy $m^2\tilde{v}\sin\frac{\pi}{n}-m\sin\frac{\pi}{n}+1-\tilde{v}\sin\frac{\pi}{n}<0$, or equivalently, to
belong to the interval $\left( \frac{1}{2\tilde{v}}-\delta, \frac{1}{2\tilde{v}}+\delta\right) $, with $\delta$ given by (\ref{delta}), which is precisely the condition on the integer $m$ in the statement of the theorem. Thus the distance between $(m,0)$ and each of the roots $\xi _1,\dots ,\xi _n$ of $f$ is larger than $1$, as all the $\xi _i$'s lie outside $L_{\tilde{v},\frac{\pi}{n}}$. This may be written as $|m-\xi _i|>1$ for $i=1,\dots ,n$.

Assume now, as in the proof of Theorem \ref{CircleInsideSector1} that $f(X)=g(X)h(X)$ with $g(X),h(X)\in \mathbb{Z}[X]$ and $\deg g\geq 1, \deg h\geq 1$. Then, since $g(m)h(m)$ is a prime number, one of $g(m)$ and $h(m)$ must be of modulus $1$, say
\begin{equation}\label{modul1}
|g(m)|=1. 
\end{equation}
Let us denote the leading coefficient of $f$ by $a_n$ and assume that the polynomial $g$ factors as $g(X)=b_t(X-\xi _1)\cdots (X-\xi _t)$ with $t\geq 1$ and $b_t$ a divisor of $a_n$. Then we must have
\[
|g(m)|=|b_t|\cdot |m-\xi _1|\cdots |m-\xi _t|\geq |m-\xi _1|\cdots |m-\xi _t|>1,
\]
which contradicts (\ref{modul1}). Thus, $f$ must be irreducible over $\mathbb{Q}$. 
\end{proof}

\begin{corollary}\label{CoroDiskInLens}
Let $f(X)$ be a polynomial with integer coefficients of degree $n\geq 3$, with $f(0)\neq 0$, and assume that its reciprocal $\tilde{f}(X)$ has no roots in the sector $S_{\tilde{v},\frac{\pi}{n}}$ for some real number $\tilde{v}\in \left( 0,\frac{1}{2}\tan \frac{\pi}{2n}\right )$. If $f(m)$ is a prime number for some integer 
\begin{equation}\label{corectata}
m\in \left(\cot \frac{\pi}{2n}, \frac{1}{\tilde{v}}-\cot \frac{\pi}{2n}\right),
\end{equation}
then $f$ is irreducible over $\mathbb{Q}$.
\end{corollary}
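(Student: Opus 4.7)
The plan is to deduce Corollary \ref{CoroDiskInLens} directly from Theorem \ref{DiskInLens} by an interval-inclusion argument. It suffices to show that every integer $m$ satisfying (\ref{corectata}) also satisfies the hypothesis $m\in \bigl(\frac{1}{2\tilde v}-\delta,\frac{1}{2\tilde v}+\delta\bigr)$ of Theorem \ref{DiskInLens}, with $\delta$ as in (\ref{delta}). Observe that the interval $I:=\bigl(\cot\frac{\pi}{2n},\,\frac{1}{\tilde v}-\cot\frac{\pi}{2n}\bigr)$ is symmetric about the point $\frac{1}{2\tilde v}$, just like the target interval $J:=\bigl(\frac{1}{2\tilde v}-\delta,\frac{1}{2\tilde v}+\delta\bigr)$. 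Hence $I\subseteq J$ is equivalent to the single scalar inequality
\[
\tfrac{1}{2\tilde v}-\cot\tfrac{\pi}{2n}\leq \delta,
\]
and the hypothesis $\tilde v\in\bigl(0,\frac12\tan\frac{\pi}{2n}\bigr)$ already guarantees that the left-hand side is strictly positive, so that $I$ is non-empty in the first place.

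It thus remains to prove this scalar inequality. Since both sides are non-negative, I would square them and expand. Using the definition of $\delta^2$, the inequality becomes, after the terms $\frac{1}{4\tilde v^2}$ cancel,
\[
\cot^{2}\tfrac{\pi}{2n}-\tfrac{1}{\tilde v}\cot\tfrac{\pi}{2n}\;\leq\; 1-\tfrac{1}{\tilde v\sin\frac{\pi}{n}}.
\]
The key simplification uses the double-angle identity $\sin\frac{\pi}{n}=2\sin\frac{\pi}{2n}\cos\frac{\pi}{2n}$, from which one obtains the two identities
\[
\cot\tfrac{\pi}{2n}-\tfrac{1}{\sin\frac{\pi}{n}}=\cot\tfrac{\pi}{n}
\quad\text{and}\quad
\cot^{2}\tfrac{\pi}{2n}-1=\tfrac{\cos\frac{\pi}{n}}{\sin^{2}\frac{\pi}{2n}}.
\]
Substituting these into the displayed inequality and dividing through by $\cos\frac{\pi}{n}>0$ (valid since $n\geq 3$) collapses everything to the clean form $\tilde v\sin\frac{\pi}{n}\leq \sin^{2}\frac{\pi}{2n}$, i.e. $\tilde v\leq \frac12\tan\frac{\pi}{2n}$, which is exactly the standing hypothesis.

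With $I\subseteq J$ established, any integer $m\in I$ for which $f(m)$ is prime automatically satisfies the hypothesis of Theorem \ref{DiskInLens}, and that theorem delivers the irreducibility of $f$ over $\mathbb{Q}$. The whole argument is essentially a trigonometric bookkeeping exercise: no new zero-free region or factorization bound is required beyond what Theorem \ref{DiskInLens} already supplies. The only mildly delicate step is the telescoping identity $\cot\frac{\pi}{2n}-\frac{1}{\sin\frac{\pi}{n}}=\cot\frac{\pi}{n}$, but this is a one-line consequence of the double-angle formula and is the main point where the specific shape of $\delta$ in (\ref{delta}) matches the endpoints in (\ref{corectata}).
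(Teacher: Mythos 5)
Your proposal is correct and follows exactly the paper's own route: the paper likewise deduces the corollary from Theorem \ref{DiskInLens} by checking that $\tilde v<\frac12\tan\frac{\pi}{2n}$ forces $\delta>\frac{1}{2\tilde v}-\cot\frac{\pi}{2n}$ and hence the inclusion of the interval (\ref{corectata}) in $\bigl(\frac{1}{2\tilde v}-\delta,\frac{1}{2\tilde v}+\delta\bigr)$. The only difference is that the paper leaves this inequality as ``one may check,'' whereas you carry out the (correct) trigonometric verification explicitly.
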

\begin{proof} 
One may check that $\tilde{v}\in \left( 0,\frac{1}{2}\tan \frac{\pi}{2n}\right )$ implies that
$\delta $ in Theorem \ref{DiskInLens} satisfies $\delta > \frac{1}{2\tilde{v}}-\cot \frac{\pi}{2n}$, so we have the inclusion of intervals
\[
\left(\cot \frac{\pi}{2n}, \frac{1}{\tilde{v}}-\cot \frac{\pi}{2n}\right)\subset \left(\frac{1}{2\tilde{v}}-\delta, \frac{1}{2\tilde{v}}+\delta \right).
\]
The conclusion follows now from Theorem \ref{DiskInLens}.
\end{proof}
To obtain a more effective, but slightly weaker result that uses no trigonometric functions, one may use the fact that $\cot \frac{\pi}{2n}<\frac{2n}{\pi}$ for $n\geq 1$, as follows.
\begin{corollary}\label{CoroDiskInLensEffective}
Let $f(X)$ be a polynomial with integer coefficients of degree $n\geq 3$, with $f(0)\neq 0$, and assume that its reciprocal $\tilde{f}(X)$ has no roots in the sector $S_{\tilde{v},\frac{\pi}{n}}$ for some real number $\tilde{v}\in \left( 0,\frac{\pi}{4n}\right )$. If $f(m)$ is a prime number for some integer 
\begin{equation}\label{corectataEffective}
m\in \left(\frac{2n}{\pi}, \frac{1}{\tilde{v}}-\frac{2n}{\pi}\right),
\end{equation}
then $f$ is irreducible over $\mathbb{Q}$.
\end{corollary}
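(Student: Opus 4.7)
The plan is to reduce this statement directly to Corollary \ref{CoroDiskInLens} by invoking the elementary inequality $\cot \frac{\pi}{2n} < \frac{2n}{\pi}$ for $n \geq 1$, as explicitly flagged in the text preceding the statement. Both the admissible range for $\tilde{v}$ and the interval for $m$ can be simultaneously weakened in the correct direction once this single inequality is in place.

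First I would justify that $\cot \frac{\pi}{2n} < \frac{2n}{\pi}$, which is equivalent to $\tan \frac{\pi}{2n} > \frac{\pi}{2n}$, an instance of the standard inequality $\tan x > x$ on $(0,\pi/2)$. Rearranging the same inequality as $\frac{1}{2}\tan \frac{\pi}{2n} > \frac{\pi}{4n}$ shows that the hypothesis $\tilde{v} \in \left(0, \frac{\pi}{4n}\right)$ of the present corollary implies the hypothesis $\tilde{v} \in \left(0, \frac{1}{2}\tan \frac{\pi}{2n}\right)$ required by Corollary \ref{CoroDiskInLens}. So the preceding corollary is genuinely applicable under our stronger assumption on $\tilde{v}$.

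Next I would establish the inclusion of intervals
\[
\left(\frac{2n}{\pi},\, \frac{1}{\tilde{v}}-\frac{2n}{\pi}\right) \subseteq \left(\cot \frac{\pi}{2n},\, \frac{1}{\tilde{v}}-\cot \frac{\pi}{2n}\right).
\]
Because $\cot \frac{\pi}{2n} < \frac{2n}{\pi}$, the left endpoint shifts to the right and the right endpoint shifts to the left when we pass from the outer to the inner interval, so the inclusion is immediate as soon as one checks that the smaller interval is non-empty. Non-emptiness follows from $\tilde{v} < \frac{\pi}{4n}$, which gives $\frac{1}{\tilde{v}} > \frac{4n}{\pi} = 2\cdot \frac{2n}{\pi}$.

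With the inclusion in hand, any integer $m$ satisfying (\ref{corectataEffective}) automatically satisfies (\ref{corectata}), and Corollary \ref{CoroDiskInLens} directly yields the irreducibility of $f$ over $\mathbb{Q}$. There is no real obstacle here: the statement is a trigonometry-free reformulation of the preceding corollary, and the argument is a short calculation aimed at producing a form that is easier to apply in explicit examples.
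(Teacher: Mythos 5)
Your proof is correct and matches the paper's intended argument exactly: the paper gives no separate proof for this corollary, merely noting that it follows from Corollary \ref{CoroDiskInLens} via the inequality $\cot \frac{\pi}{2n}<\frac{2n}{\pi}$, which is precisely the reduction you carry out. Your verification of both the $\tilde{v}$-range implication and the interval inclusion is accurate and complete.
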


We note that there exist polynomials $f$ of degree $n$ with arbitrarily large absolute values of their roots, for which their reciprocals $\tilde{f}$ have no roots in sectors $S_{\tilde{v},\frac{\pi}{n}}$ with reasonably small length of the interval in (\ref{corectata}). For such polynomials it might be therefore more efficient to use Corollary \ref{CoroDiskInLens} instead of searching for integers $m$ with $f(m)$ prime and $m$ exceeding the naive bound $1+\max\limits _{f(\xi)=0}|\xi|$. We will give examples of such polynomials in the following section.

By combining Theorem \ref{CircleInsideSector1} with $q=1$ and Corollary \ref{CoroDiskInLens} we finally obtain: 
\begin{corollary}\label{CoroCombinat}
Let $f(X)$ be a polynomial with integer coefficients of degree $n\geq 3$, with $f(0)\neq 0$, and assume that $f$ and its reciprocal $\tilde{f}$ have no roots in the sectors $S_{v,\frac{\pi}{n}}$ and $S_{\tilde{v},\frac{\pi}{n}}$, respectively, for some positive real numbers $v$ and $\tilde{v}$, with $\tilde{v}<\frac{1}{2}\tan \frac{\pi}{2n}$. If $f(m)$ is a prime number for some integer
\begin{equation}\label{vsivtilde}
m\in \left(\cot \frac{\pi}{2n}, \frac{1}{\tilde{v}}-\cot \frac{\pi}{2n}\right)\cup \left(v+\frac{1}{\sin \frac{\pi}{n}},\infty\right),
\end{equation}
then $f$ is irreducible over $\mathbb{Q}$.
\end{corollary}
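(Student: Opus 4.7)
The plan is to argue by cases according to which of the two intervals in $(\ref{vsivtilde})$ contains the integer $m$. The two cases are completely independent, and each is handled by invoking a previously established result.

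First I would treat the case $m \in \left(v+\frac{1}{\sin\frac{\pi}{n}},\infty\right)$. Here we are in the setting of Theorem \ref{CircleInsideSector1}, applied with $q=1$ and $\theta = \frac{\pi}{n}$ (note that since $n\geq 3$, we have $\frac{\pi}{n}\leq \frac{\pi}{3}<\frac{\pi}{2}$, so the hypothesis $0<\theta\leq \frac{\pi}{2}$ of Theorem \ref{CircleInsideSector1} is satisfied). The assumption that $f$ has no roots in $S_{v,\frac{\pi}{n}}$ and the assumption $m>v+\frac{1}{\sin(\pi/n)}$ are exactly what that theorem requires, and $f(m)$ being prime (with $q=1$) immediately yields the irreducibility of $f$ over $\mathbb{Q}$.

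Next I would treat the case $m \in \left(\cot\frac{\pi}{2n},\frac{1}{\tilde{v}}-\cot\frac{\pi}{2n}\right)$. This is precisely the hypothesis of Corollary \ref{CoroDiskInLens}, which applies since $n\geq 3$, $f(0)\neq 0$, $\tilde{v}\in \left(0,\frac{1}{2}\tan\frac{\pi}{2n}\right)$, and $f(m)$ is prime for such an $m$. That corollary directly concludes that $f$ is irreducible over $\mathbb{Q}$.

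Since the integer $m$ must lie in one of the two intervals by assumption, and in either case $f$ is irreducible, the proof is complete. There is no real obstacle here: the statement is a pure combination of Theorem \ref{CircleInsideSector1} (with the parameter choice $q=1$) and Corollary \ref{CoroDiskInLens}, packaged to emphasize that the zero-free sector for $f$ and the zero-free sector for its reciprocal $\tilde{f}$ provide two complementary regions where a prime value $f(m)$ forces irreducibility. The only point to double-check is the mild hypothesis $\frac{\pi}{n}\leq \frac{\pi}{2}$ needed to invoke Theorem \ref{CircleInsideSector1}, which holds automatically since $n\geq 3$.
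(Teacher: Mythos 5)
Your proposal is correct and matches the paper's own (implicit) argument exactly: the paper introduces this corollary with the phrase ``By combining Theorem \ref{CircleInsideSector1} with $q=1$ and Corollary \ref{CoroDiskInLens} we finally obtain,'' which is precisely your case split on the two intervals in (\ref{vsivtilde}). Nothing further is needed.
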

\begin{remark}\label{SiMaiFina}
If $\cot \frac{\pi}{n}<v<\frac{1}{\tilde{v}}-\cot \frac{\pi}{2n}-\frac{1}{\sin\frac{\pi}{n}}$, condition 
(\ref{vsivtilde}) reduces to $m>\cot \frac{\pi}{2n}$, while if $v\leq\cot \frac{\pi}{n}$, condition (\ref{vsivtilde}) reduces to $m>v+\frac{1}{\sin\frac{\pi}{n}}$, and in this case there is no need to impose the condition $\tilde{v}<\frac{1}{2}\tan \frac{\pi}{2n}$.
\end{remark}
The reader may of course prove more general results of this kind, by allowing $f(m)$ to be of the form $pq$, or $p^kq$ with $k\geq 2$, as in Theorem \ref{CircleInsideSector1} and Theorem \ref{CircleInsideSector2}. %Moreover, one may obtain similar results that allow the integer $m$ to be negative, by considering instead of $f(X)$ the polynomial $f(-X)$.

\section{Examples}\label{examples}
1) Let $f(X)$ be a polynomial with non-negative integer coefficients of degree $n\geq 2$. Then for any integer $m>\frac{1}{\sin \frac{\pi}{n}}$ and any prime number $p\geq f(m)-f(0)$, the polynomials 
\[
g_{m,p}(X)=f(X)+p-f(m)
\]
are irreducible over $\mathbb{Q}$. To prove this, we see that $g_{m,p}(X)$ is of degree $n$, has non-negative coefficients, and $g_{m,p}(m)=p$, a prime number, and one can apply Corollary \ref{coro1}.

2) Let $f(X)=a_0+a_1X+\cdots +a_nX^n\in \mathbb{Z}[X]$ be a polynomial of degree $n\geq 2$, with all partial sums $a_n+a_{n-1}+\cdots +a_{n-j}$ non-negative, where $0\leq j\leq n$. Then for any integer $m>1+\frac{1}{\sin \frac{\pi}{n}}$ and any prime number $p\geq f(m)-f(1)$, the polynomials 
\[
g_{m,p}(X)=f(X)+p-f(m)
\]
are irreducible over $\mathbb{Q}$. Using the notations in the proof of Lemma \ref{lemapozitivagenerala}, we see that $g_{m,p}(X)$ is of degree $n$, and its partial sums $s_{g_{m,p},j,1}$ coincide with $s_{f,j,1}$ for $j=0,\dots ,n-1$, while 
\[
s_{g_{m,p},n,1}=s_{f,n,1}+p-f(m)=a_n+a_{n-1}+\cdots +a_{0}+p-f(m)=p-f(m)+f(1),
\]
which is also non-negative. Since $g_{m,p}(m)=p$, a prime number, one can apply Corollary \ref{coro3}.

3) Let $f(X)$ be a polynomial with non-negative integer coefficients of degree $n\geq 2$. Then for any integer $m>\frac{1}{\sin \frac{\pi}{n}}$, any integer $k\geq 2$ and any prime number $p>f'(m)$, the polynomials 
\[
g_{m,p}(X)=f(X)+p^{k}-f(m)
\]
are irreducible over $\mathbb{Q}$. We first prove that $g_{m,p}(X)$ has non-negative coefficients. As $f$ has this property, it remains to check that $g_{m,p}(0)\geq 0$, or equivalently, that $p^{k}\geq f(m)-f(0)$. Since $k\geq 2$ and $p>f'(m)$, it suffices to prove that $f'(m)^{2}\geq f(m)-f(0)$. So let us write $f(X)=a_0+a_1X+\cdots +a_nX^n$. Using the fact that the $a_i$'s are non-negative, we deduce that
\begin{eqnarray*}\label{f'mare}
f'(m)^2 & =& (na_nm^{n-1}+(n-1)a_{n-1}m^{n-2}+\cdots +2a_2m+a_1)^2 \\ 
& \geq & na_nm^{n-1}(na_nm^{n-1}+(n-1)a_{n-1}m^{n-2}+\cdots +2a_2m+a_1)\\
 & \geq & a_nm^n+a_{n-1}m^{n-1}+\cdots +a_1m=f(m)-f(0),
\end{eqnarray*}
with the last inequality holding since $na_nm^{n-1}\cdot ia_im^{i-1}\geq a_im^i$ for each $i=1,\dots ,n$. 

We finally observe that $g'_{m,p}(X)=f'(X)$, so $p\nmid g'_{m,p}(m)$, and since $g_{m,p}(m)=p^{k}$, one can apply Corollary \ref{coro4}.

4) If we can write a prime number as $81-27a+b$ for some positive integers $a,b$ with $b>216a$, then the polynomial $f(X)=X^4-aX^3+b$ is irreducible over $\mathbb{Q}$. To see this, first observe that by Theorem \ref{SectorGenerala1}, its reciprocal $\tilde{f}(X)=bX^4-aX+1$ has no roots in the sector $S_{\tilde{v},\frac{\pi}{4}}$ with $\tilde {v}=(\frac{a}{b})^{\frac{1}{3}}$. As $b>216a$, we see that $\tilde{v}<\frac{1}{6}<\frac{\sqrt{2}-1}{2}=\frac{1}{2}\tan \frac{\pi}{8}$, so by Corollary \ref{CoroDiskInLens}, if $f(m)$ is a prime number for an integer $m\in (1+\sqrt{2}, (\frac{b}{a})^{\frac{1}{3}}-1-\sqrt{2})$, then $f$ must be irreducible over $\mathbb{Q}$. A suitable such candidate for $m$ is obviously $3$, as it belongs to this interval. Note that by Theorem \ref{SectorGenerala1}, $f$ has no roots in the sector $S_{a,\frac{\pi}{4}}$, so if $f(m)$ is a prime number for some integer $m>a+\frac{1}{\sin \frac{\pi}{4}}$ one may also conclude that $f$ is irreducible, by using Theorem \ref{CircleInsideSector1}. However, for large $a$, and hence large $b$ too, testing this might be more difficult than testing that $f(3)$ is a prime number. For an explicit example, one may for instance consider the polynomial $f(X)=X^4-10X^3+2162$, which is irreducible, as $f(3)=1973$ is a prime number.


\begin{thebibliography}{99}  
\bibitem{Ballieu} R. Ballieu, \textit{Sur les limitations des racines d'une 
\'equation alg\'ebrique}, Acad. Roy. Belg. Bull. Cl. Sci. (5) 33 (1947), 747--750.

\bibitem{BMS} P. Batra, M. Mignotte, and D. \c Stef\u anescu, {\it Improvements 
of Lagrange's bound for polynomial roots}, J. Symbolic Comput. 82 (2017), 19--25.

\bibitem{BDN3} A. Bodin, P. D\`ebes and S. Najib, {\it Prime and coprime values of polynomials.} Enseign. Math. (2) 66 (2020), 169--182.
 
\bibitem{Bonciocat1}  A.I. Bonciocat and N.C. Bonciocat, \textit{The irreducibility of polynomials that have one large coefficient and take a prime value}, 
Canad. Math. Bull. 52 (2009), no. 4, 511--520.

\bibitem{Bonciocat2} A.I. Bonciocat, N.C. Bonciocat, and A. Zaharescu, \textit{On the irreducibility of polynomials that take a prime power value}, 
Bull. Math. Soc. Sci. Math. Roumanie 54 (102) (2011), no. 1, 41--54.


\bibitem{Brillhart}  J. Brillhart, M. Filaseta, and A. Odlyzko, \textit{On an irreducibility theorem of A. Cohn}, Canad. J. Math. 33 (1981), no. 5, 1055--1059.

\bibitem{CDF} M. Cole, S. Dunn,  and M. Filaseta, 
{\it Further irreducibility criteria for polynomials with non-negative coefficients}, Acta Arith. 175 (2016), no. 2, 137--181. 

\bibitem{Cowling1} V.F. Cowling and  W.J. Thron, \textit{Zero-free regions of polynomials}, 
Amer. Math. Monthly 61 (1954), 682--687.

\bibitem{Cowling2} V.F. Cowling and  W.J. Thron, \textit{Zero-free regions of polynomials}, 
J. Indian Math. Soc. (N.S.) 20 (1956), 307--310.

\bibitem{Dorwart}  H.L. Dorwart {\it Irreducibility of polynomials}, Amer. Math. Monthly 42 (1935), no. 6, 369--381.

\bibitem{Filaseta1}  M. Filaseta, \textit{A further generalization of an irreducibility theorem of A. Cohn}, Canad. J. Math. 34 (1982), no. 6, 1390--1395.

\bibitem{Filaseta2}  M. Filaseta, \textit{Irreducibility criteria for polynomials with non-negative coefficients}, Canad. J. Math. 40 (1988), no. 2, 339--351.

\bibitem{Fujiwara}  M. Fujiwara, \textit{\"{U}ber die obere Schranke des absoluten Betrages der Wurzeln einer algebraischen Gleichung}, T\^{o}hoku
Math. J. 10 (1916), 167--171.

\bibitem{Girstmair}  K. Girstmair, \textit{On an Irreducibility Criterion of M. Ram Murty}, Amer. Math. Monthly 112 (2005), no. 3, 269--270.

\bibitem{Guersenzvaig} N.H. Guersenzvaig, \textit{Simple arithmetical criteria for irreducibility of polynomials with integer coefficients}, Integers 13 (2013), 1--21.

\bibitem{GS} N.H. Guersenzvaig and F. Szechtman, \textit{Roots multiplicity and square free factorization of polynomials using companion matrices}, Linear Algebra Appl. 436 (9) (2012), 3160-3164.

\bibitem{Kempner} A.J. Kempner, \textit{Ueber die Separation komplexer Wurzeln algebraischer Gleichungen}, Math. Ann. 85 (1992), 49--59.

\bibitem{Kioustelidis} J.B. Kioustelidis, \textit{Bounds for positive roots of polynomials}, J. Comput. Appl. Math. 16 (1986), 241--244.

\bibitem{Kojima} T. Kojima, \textit{On a theorem of Hadamard's and its application}, T\^{o}hoku Math. J. 5 (1914), 54--60.

\bibitem{Laszlo} L. L\' aszl\' o, \textit{Imaginary Part Bounds on Polynomial Zeros}, Linear Algebra Appl. 44 (1982), 173--180.

\bibitem{Marden} M. Marden, \textit{Geometry of polynomials}, Mathematical Surveys and Monographs No. 3, American Mathematical Society, Providence, RI, 1966.

%\bibitem{MM3} M. Mignotte, {\it An inequality on the greatest roots of a polyomial}, Elem. Math. 46 (1991), no. 3, 85--86.

\bibitem{MS} M. Mignotte and D. \c Stef\u anescu, \textit{Polynomials. An algorithmic approach}, Springer 1999.

\bibitem{Ore} O. Ore, {\it Einige Bemerkungen 
\"uber Irreduzibilit\"at}, Jahresbericht der Deutschen Mathematiker-Vereinigung 44 (1934), 147--151.

\bibitem{Perron} O. Perron, \textit{Algebra. II Theorie der algebraischen Gleichungen}, Walter de Gruyter \& Co., Berlin, 1951.

\bibitem{PolyaSzego}  G. P\'{o}lya and  G. Szeg\"{o}, \textit{Aufgaben und Lehrs\"{a}tze aus der Analysis}, Springer-Verlag, Berlin, 1964.

\bibitem{RamMurty}  M. Ram Murty, \textit{Prime numbers and irreducible polynomials}, Amer. Math. Monthly 109 (2002), no. 5, 452--458.

\bibitem{Stackel} P. St\"ackel, {\it Arithmetischen Eigenschaften ganzer Funktionen}, Journal f\"ur Mathematik 148 (1918), 101--112.

\bibitem{DStef1} D. \c Stef\u anescu, {\it On bounds for real roots of polynomials}, Rom. Journ. Phys. 58 (2013), nos. 9--10, 1428--1435.

\bibitem{Turan} P. Tur\' an, \textit{Hermite-expansion and strips for zeros of polynomials}, Arch. Math. (Basel) 5 (1954), 148--152.

\bibitem{Weisner} L. Weisner, {\it Criteria for the irreducibility of polynomials}, Bull. Amer. Math. Soc. 40 (1934), 864--870.

\end{thebibliography}
\end{document}